\documentclass{article}
\usepackage[utf8]{inputenc}

\usepackage{lineno}
\usepackage{amsmath,amsthm,amssymb}
\usepackage{graphicx}
\usepackage{comment}
\usepackage{anyfontsize}
\usepackage{multirow}
\usepackage{wrapfig}
\usepackage{listings}
\usepackage{tcolorbox}
\usepackage[colorlinks, bookmarks]{hyperref}
\textwidth=16.6cm
\hoffset=-20 true mm

\usepackage[english]{babel}
\newtheorem{theorem}{Theorem}[section]
\newtheorem{lemma}[theorem]{Lemma}
\newtheorem{cor}[theorem]{Corollary}
\newtheorem{corollary}[theorem]{Corollary}

\theoremstyle{definition}

\newtheorem{rmk}[theorem]{Remark}
\newtheorem{defin}[theorem]{Definition}


\def\C{{\mathbb C}}
\def\K{{\mathbb K}}

\def\diag{{\rm diag\,}}

\begin{document}
	\title{Integrability of matrices}
	\author{S. Danielyan$^{a}$, A. Guterman$^{a,d,e}$, E. Kreines$^{b,c,e}$,
		F. Pakovich$^{c}$}

	\date{\small $^{a}$Bar-Ilan University, Ramat Gan 5290002, Israel \\ $^{b}$Tel Aviv University, Tel Aviv 6997801,
		Israel \\ $^{c}$Ben Gurion University of the Negev, Beer-Sheva 8410501, Israel\\
		$^{d}$Lomonosov Moscow State University, Moscow
		119991, Russia \\ $^{e}$Moscow Center of Fundamental and Applied Mathematics,
		Moscow 119991, Russia }
	\maketitle
	
	\begin{abstract}
		The concepts of differentiation and integration for matrices are known.
		As far as each matrix is differentiable, it is 
		not clear a priori whether 
		a given matrix is integrable or not.
		Recently some progress was obtained for  diagonalizable matrices, however general problem remained open. 
		In this paper, we present a full solution 
		of the integrability problem. Namely,  we provide necessary and sufficient conditions for a given matrix to be integrable in terms of its characteristic polynomial. Furthermore, we  find necessary and sufficient conditions for the existence of integrable and non-integrable matrices with given geometric multiplicities of eigenvalues. Our approach relies on properties of some special classes of polynomials, namely, Shabat polynomials and conservative polynomials, arising in number theory and dynamics. 
		\\
		MSC2020: 15A20
		\\
		{\bf Keywords}: polynomials, matrices, differentiators, integrators
	\end{abstract}

	
	\section{Introduction}
	
	The concepts of differentiation and integration for matrices were introduced
	for studying zeros and critical points of complex polynomials.
	The 
	notion of matrix   differentiability was
	introduced by Davis in \cite{Davis} and further  investigated
	in~\cite{IntegratorsOfMatricies,CheN06,CheN10,KushT16,Differentiators}. 
	The converse operation, namely, the operation of integration, is due to
	Bhat and    Mukherjee (\cite{IntegratorsOfMatricies}).
	Notice that the inegrability of matrices has  applications to the nonnegative inverse eigenvalue problem and to   inequalities like the dual Schoenberg type inequality (see  \cite{DanielyanGuterman,HMcCPT}  and references therein).

	Originally, differentiability and integrability of matrices  were defined for matrices over the field of complex numbers $\C$. In this paper, we allow the main field $\K$ to be an arbitrary algebraically
	closed subfield of $  \C$. Notice that such $\K$ contains $\mathbb Q$ and its algebraic closure $\overline {\mathbb Q} $. We denote
	the set of $n\times n$ matrices over $\K$ by $M_n := M_n(\K)$, the set of row vectors of the length $n$ over $\K$ by $\K^n$, and the set of polynomials in $x$ over $\K$ by $\K[x]$.  Accordingly, our main definition is the following.

	\begin{defin}
		Let  $B \in M_{n}(\K).$ Then $ A \in M_{n+1}(\K)$ is called
		{\it an integral  of } $B$
		if there exist $u,v\in \K^n$ and $b\in \K$ such that
		$
		A =
		\left[
		\begin{array}{cccc}
			B & u^{\top} \\
			v & b 
		\end{array}
		\right]
		$
		and   $p_{B}(x) = \frac{1}{n+1}p_{A}'(x)$,  where   $p_M(x)$ denotes the characteristic polynomial of a matrix~$M$, and $p'$ is the formal derivative of $p$. 
		We say that $B$ is integrable if there exists an integral~of~$B$. 
	\end{defin}

	Integrability of matrices was firstly investigated
	in~\cite{IntegratorsOfMatricies}. 
	In particular,
	it was proved in~\cite{IntegratorsOfMatricies} that non-derogatory matrices, that is, matrices that
	have only one Jordan cell for each eigenvalue, are always integrable. Thus, the remaining problem was to determine if a matrix having several Jordan cells with the same eigenvalue is integrable or not.
	In the subsequent paper \cite{DanielyanGuterman}, the integrability problem was solved for
	diagonalizable matrices by 
	using  methods of matrix analysis
	and linear algebra.
	However, the general case remained open. 
	In this paper, we provide a  complete solution of the integrability problem  for  arbitrary matrices.

	\section{Main results and plan of the paper}

	Our first main result is the following statement.

	\begin{theorem} \label{Cor:THM:general_integrable_iff_fullintegral}
		Let $B\in M_n$ be a matrix, and $S = \{\lambda \ |\ \dim \ker(B-\lambda I)>1 \}$ the set of eigenvalues of $B$ such that  there exist more than one Jordan cells with  the eigenvalue $\lambda $. 
		Then $B$ is integrable if and only if an integral $\int p_B(x) {\rm d}x$ takes the same value on all elements of~$S$.
	\end{theorem}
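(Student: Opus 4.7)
The plan is to reduce the integrability of $B$ to a divisibility condition on a fixed antiderivative of $(n+1) p_B$, which then localizes, one eigenvalue at a time, to a single value condition thanks to $p_A'(x) = (n+1) p_B(x)$. First, I would establish a parametrization of the characteristic polynomials of bordered extensions of $B$. By the Schur complement formula,
\begin{equation*}
p_A(x) = (x - b)\, p_B(x) - v \cdot \mathrm{adj}(xI - B) \cdot u^{\top},
\end{equation*}
so the task is to describe the polynomials $g(x) = v \cdot \mathrm{adj}(xI - B) \cdot u^{\top}$ as $(u, v)$ varies over $\K^n \times \K^n$. Passing to Jordan coordinates and using the explicit formula
\begin{equation*}
\mathrm{adj}(xI - J_{k}(\lambda)) = \sum_{j=0}^{k-1} N^{j} (x - \lambda)^{k - 1 - j},
\end{equation*}
a direct block-by-block computation shows that the set of such $g(x)$ coincides precisely with the set of polynomials of degree at most $n - 1$ divisible by $p_B(x)/m_B(x)$, where $m_B$ is the minimal polynomial of $B$.

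Next, I would rephrase integrability. Denote by $N(\lambda)$ the algebraic multiplicity of $\lambda$ as a root of $p_B$ and by $n_1(\lambda)$ the size of the largest Jordan block of $B$ at $\lambda$, so that $S = \{\lambda : n_1(\lambda) < N(\lambda)\}$ and $p_B/m_B = \prod_\lambda (x - \lambda)^{N(\lambda) - n_1(\lambda)}$. The condition $p_A'(x) = (n+1) p_B(x)$ forces $p_A(x) = P(x) + C$ for a fixed monic antiderivative $P$ of $(n+1) p_B$ and some constant $C \in \K$. Choosing $b$ to match the coefficient of $x^{n}$, one has $\deg(P + C - (x - b) p_B) \le n - 1$, so by the parametrization lemma, $B$ is integrable iff there exist $b, C \in \K$ with $(p_B/m_B) \mid P + C - (x - b) p_B$. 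Since $(x - b) p_B$ is divisible by $(x - \lambda)^{N(\lambda)}$ at each eigenvalue $\lambda$, this condition localizes to $(x - \lambda)^{N(\lambda) - n_1(\lambda)} \mid P(x) + C$ for every eigenvalue $\lambda$ of $B$.

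Finally, I would exploit $P'(x) = (n+1) p_B(x)$ to collapse each local condition. The condition $(x - \lambda)^{N - n_1} \mid P + C$ says $(P + C)^{(j)}(\lambda) = 0$ for $j = 0, 1, \ldots, N - n_1 - 1$; but for $j \ge 1$ this reads $(n + 1) p_B^{(j-1)}(\lambda) = 0$, which holds automatically since $p_B$ vanishes to order $N$ at $\lambda$. Hence the only nontrivial requirement is $P(\lambda) + C = 0$, which is vacuous when $\lambda \notin S$. Therefore, $B$ is integrable iff there exists $C \in \K$ with $P(\lambda) = -C$ for all $\lambda \in S$, which is precisely the condition that any antiderivative of $p_B$ takes the same value on every element of $S$. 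The main obstacle is the parametrization lemma of the first step: while the divisibility-by-$p_B/m_B$ characterization is natural, verifying that every such polynomial is actually realized requires careful dimension bookkeeping over the individual Jordan blocks. The rest of the argument is then a clean localization combined with the observation that $p_A'$ already forces all higher-order vanishing conditions for free.
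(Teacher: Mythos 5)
Your proposal is correct, and its computational core coincides with the paper's: the Schur-complement identity $p_A(x)=(x-b)p_B(x)-v\,\mathrm{adj}(xI-B)\,u^{\top}$ together with the Jordan-cell resolvent expansion is exactly what underlies \eqref{eq:pApB_block}--\eqref{eq:pA} in Lemma \ref{LM:general_pA}, and the forced value $b=\mathrm{tr}(B)/n$ appears in both arguments. Where you genuinely diverge is the middle of the proof: you replace the paper's two technical lemmas --- the normalization of $v$ by invertible polynomials in Jordan cells (Lemmas \ref{LM:jordan_basic_for_integral} and \ref{lem:normA}) and the Hermite-interpolation Lemma \ref{LM:fg'=F} used to match $F^{(l)}(b_i)$ at eigenvalues $b_i\notin S$ --- by a single image-characterization lemma: the set of polynomials $v\,\mathrm{adj}(xI-B)\,u^{\top}$ is exactly the set of polynomials of degree at most $n-1$ divisible by $p_B/m_B$ (using that the $(x-\lambda)$-adic valuation of $m_B$ is the largest cell size $n_1(\lambda)$). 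That lemma is true, but its surjectivity half needs one sentence more than your ``dimension bookkeeping'' remark: the image of a bilinear map is not a priori a subspace, so one should fix $v$ --- say, the first coordinate vector on the largest Jordan cell of each eigenvalue and zero elsewhere --- making the map linear in $u$; the image is then the span of $p_B(x)/(x-b_i)^{t}$, $1\le t\le n_1(b_i)$, which has dimension $\deg m_B$, is contained in the divisible space of the same dimension, and hence equals it (this fixed choice of $v$ is the paper's normalization in disguise, and only this containment, not full surjectivity, is needed for the forward implication). Your packaging buys conceptual clarity: integrability becomes the clean statement that $(p_B/m_B)\mid P+C$ for some constant $C$, the localization to each eigenvalue is immediate, and the observation that $P'=(n+1)p_B$ makes all vanishing conditions of order $j\ge 1$ automatic collapses everything to $P(\lambda)+C=0$ for $\lambda\in S$ --- the same punchline the paper reaches through its two-case analysis ($b_i\in S$ versus $b_i\notin S$) and the degree/trace argument identifying $p_A=F$. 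What the paper's route buys instead, and what its authors explicitly emphasize, is effectiveness: their interpolation step constructs the bordering vectors $u,v$ explicitly from a given $S$-full integral, whereas your argument establishes existence through a dimension count.
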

	
	Solving the integrability problem for any given matrix, Theorem \ref{Cor:THM:general_integrable_iff_fullintegral} also reduces finding 
	necessary and sufficient conditions for the existence of integrable and non-integrable matrices with given geometric multiplicities of eigenvalues to solving the corresponding problem about integrals of polynomials.  
	In more detail, let us  introduce the concept of  $S$-full integral of a polynomial by the following definition.

	\begin{defin}
		Let $f(x)\in \K[x]$ be a polynomial,  $Z'(f)$ the set of zeros
		of $f$ of multiplicity at least two, and $S$ a subset of $ Z'(f)$.
		A polynomial $F(x)\in \K[x]$ is called an $S$-full integral of $f(x)$ if
		
		1. $F'(x) = f(x)$,
		
		2. $F(t) = 0$ for any $t \in S.$
		
		\noindent We say that $f(x)$ is $S$-full integrable if there exists an $S$-full integral~of~$B$. 
		
	\end{defin}

	In view of Theorem \ref{Cor:THM:general_integrable_iff_fullintegral},  the existence problem for  integrable and non-integrable matrices with given geometric multiplicities of eigenvalues is equivalent to the existence problem for $S$-full integrals of a polynomial with given multiplicities of zeros, and much of the paper is devoted to the solution of the last problem.  
	A particular case of this problem with $S = Z'(f)$ was solved   in the recent paper  \cite{dg}. 
	As a corollary, 
	the relations between the
	spectrum of a diagonalizable matrix and
	its integrability  
	were established  (see~\cite[Theorem~3.13]{DanielyanGuterman}).
	
	In this paper, we solve the existence problem for $S$-full integral of a polynomial with given multiplicities of zeros. 
	We write the polynomials under consideration in the form
	\begin{equation} \label{eq:f}
		f(x) = (x - a_1)^{\alpha_1}\ldots(x - a_m)^{\alpha_m}
		(x - b_1)^{\beta_1}\ldots(x - b_{k})^{\beta_{k}},
	\end{equation}
	where the zeros $a_1,\ldots, a_m, b_1, \ldots, b_k$ are pairwise distinct,
	and the multiplicities satisfy the conditions
	$\alpha_1,\ldots, \alpha_{m}\geq 2,\,\beta_1,\ldots, \beta_{k}\ge 1$,
	so that the zeros $ a_1,\ldots, a_m$ are multiple, and the zeros $b_1, \ldots, b_k$
	can be arbitrary.
	Thus, the set $S=\{a_1,\ldots, a_m\}$ is always a subset of $Z'(f)$,
	and we can consider the question about $S$-full integrability of~$f$.

	Our main result concerning the existence of $S$-full integrals is  the following.
	
	
	\begin{theorem} \label{THM:main} 
		Let  $m,k\ge 0$,
		$\alpha_1,\ldots, \alpha_{m}\geq 2,$ $\beta_1,\ldots, \beta_{k}\ge 1$ be integers, 
		and 
		\[
		n=\sum\limits_{j=1}^{m}\alpha_j+\sum\limits_{i=1}^{k}\beta_i, \ \ \ \ M=\sum\limits_{j=1}^{m}\alpha_j.
		\]
		Then 
		the following statements are true.
		
		\begin{enumerate} 
			\item If $m=0$, then for all pairwise distinct
			$b_1,\ldots, b_k \in \K$  the polynomial
			\[
			f(x) = (x - b_1)^{\beta_1}\ldots(x - b_k)^{\beta_k}
			\]
			has an $S$-full integral with respect to~$S=\emptyset$.
			
			\item If $m=1$, then for all pairwise distinct
			$a, b_1,\ldots, b_k \in \K$  the polynomial
			\[
			f(x) = (x - a)^{\alpha_1}(x - b_1)^{\beta_1}\ldots(x - b_k)^{\beta_k},
			\]
			has an $S$-full integral with respect to~$S=\{a\}$.
			
			\item If\, $2 \leq m \leq n - M + 1$, then there exist pairwise distinct
			$a_1,\ldots, a_m, b_1, \ldots, b_k  \in \overline{\mathbb Q}$
			such that
			the polynomial $f_1(x) = (x - a_1)^{\alpha_1}\ldots(x - a_m)^{\alpha_m}
			(x - b_1)^{\beta_1}\ldots(x - b_{k})^{\beta_{k}}$  has an
			$S$-full integral with respect to $S=\{a_1,\ldots,a_m\}$, 
			and  there exist pairwise distinct $ 
			\{a_1',\ldots, a_m', b_1', \ldots, b_k'\} \in \overline{\mathbb Q}$
			such that
			the polynomial $f_2(x) = (x - a_1')^{\alpha_1}\ldots(x - a_m')^{\alpha_m}
			(x - b_1')^{\beta_1}\ldots(x - b_{k}')^{\beta_{k}}$ does not have an
			$S$-full integral with respect to $S=\{a_1',\ldots,a_m'\}$.

			\item If $m > n - M + 1$ then for all pairwise distinct
			$a_1,\ldots, a_m, b_1, \ldots, b_k \in \K$  the polynomial
			$$f(x) = (x - a_1)^{\alpha_1}\ldots(x - a_m)^{\alpha_m}
			(x - b_1)^{\beta_1}\ldots(x - b_k)^{\beta_{k}}$$
			does not have an
			$S$-full integral with respect to $S=\{a_1,\ldots,a_m\}$.
		\end{enumerate}
		In particular,  an $S$-full integrable polynomial of the form~\eqref{eq:f}  exists if and only if $m\le n-M+1$.

	\end{theorem}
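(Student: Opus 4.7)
The plan is to treat the four cases of the theorem separately. Cases 1 and 2 are immediate: when $m=0$ the set $S$ is empty so any antiderivative of $f$ is an $S$-full integral, and when $m=1$ the constant of integration can be chosen to enforce $F(a_1)=0$. Case 4 ($m>n-M+1$) follows from a degree count: any $S$-full integral $F$ must have a zero of order at least $\alpha_i+1$ at each $a_i$ (combining $F(a_i)=0$ with the fact that $F'=f$ has a zero of order $\alpha_i$ there), hence $\deg F\ge M+m$, contradicting $\deg F=n+1$.

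The substantive content is Case 3, where both an integrable and a non-integrable polynomial of the prescribed shape must be produced. The plan is to invoke the two special polynomial classes foreshadowed in the introduction: Shabat polynomials for existence, and conservative polynomials for non-existence.

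For the existence of an integrable $f_1$, I would construct $F$ as a Shabat polynomial via Grothendieck's dessins d'enfants. Fix critical values $\{0,v\}$ with $v\neq 0$, and seek $F$ of degree $n+1$ whose preimage of $0$ contains the $a_i$'s with ramification indices $\alpha_i+1$ and possibly some of the $b_j$'s with ramification $\beta_j+1$, while the preimage of $v$ contains the remaining $b_j$'s with ramification $\beta_j+1$, and all other preimages are unramified. The degree-sum conditions read $\sum_{i=1}^m(\alpha_i+1)+\sum_{j\in B_0}(\beta_j+1)+s_0=n+1=\sum_{j\in B_v}(\beta_j+1)+s_v$ with $s_0,s_v\ge 0$ and $B_0\sqcup B_v=\{1,\dots,k\}$; here $s_0+s_v=n+2-m-k$, which is nonnegative, and the partition of the $b_j$'s can be adjusted so that both $s_0,s_v\ge 0$ exactly when $m\le n-M+1$. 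Any bipartite plane tree realizing such a passport lifts to a Shabat polynomial over $\overline{\mathbb Q}$ whose critical points are the distinct points $a_i,b_j$; rescaling so that $F'$ is monic yields the desired $S$-full integral.

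For the non-existence of an $S$-full integral, I would use conservative polynomials: $F$ is conservative if $F(c)=c$ for every critical point $c$. If a conservative $F\in\overline{\mathbb Q}[x]$ of degree $n+1$ exists with critical multiplicities $\alpha_i$ at $a_i'$ and $\beta_j$ at $b_j'$, then for any constant $C$ we have $(F+C)(a_i')=a_i'+C$, which cannot vanish simultaneously at $m\ge 2$ distinct $a_i'$'s; hence $f_2:=F'$ is not $S$-full integrable. Thus the construction reduces to the existence of a conservative polynomial with the prescribed multiplicity pattern, valid in the range $2\le m\le n-M+1$. The main obstacle is invoking the correct existence results for Shabat and conservative polynomials with prescribed passports over $\overline{\mathbb Q}$, and verifying that both constructions yield pairwise distinct critical points; these are the technical core of the argument and are essentially supplied by the literature on dessins d'enfants and on conservative polynomials.
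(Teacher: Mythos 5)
Your proposal is correct and follows essentially the same route as the paper: parts 1, 2 and 4 via the same constant-shift and degree-count arguments, and part 3 via Shabat polynomials (a dessin whose fiber over $0$ contains all the $a_i$ with ramification $\alpha_i+1$) for existence, and conservative polynomials (any primitive is $C+c$, which cannot vanish at two distinct fixed critical points) for non-existence. The one step you assert rather than prove --- that the $b_j$'s and the unramified preimages can be split between the two fibers with $s_0,s_v\ge 0$ exactly when $m\le n-M+1$, and that the resulting passport is realized by a bicolored plane tree --- is precisely the combinatorial core the paper isolates as Lemma~\ref{lemder3} and proves by induction (gluing pendant edges), so modulo that standard realizability fact your argument matches the paper's.
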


	We remark that the   main difficulty (as well as the main interest) 
	in proving Theorem \ref{THM:main} is to establish the existence of integrable and non-integrable polynomials in the third part.
	Our approach to the proof of this part relies on some
	techniques having their origin in number theory and dynamics. 
	Namely,  we use the beautiful relations between plane trees and two types of
	complex polynomials: Shabat polynomials arising in the 
	Grothendieck ``dessin d'enfant'' theory and conservative polynomials arising in dynamics. To construct polynomials that posses  $S$-full integrals we use Shabat polynomial, while 
	to construct polynomials that do not possess $S$-full integrals
	we use conservative polynomials. 
	
	Finally, combining Theorem~\ref{Cor:THM:general_integrable_iff_fullintegral} and Theorem  \ref{THM:main} we obtain the following result.

	\begin{theorem} \label{THM:matrix_int_classification}
		Let  $m,k\ge 0$ and
		$\alpha_1,\ldots, \alpha_{m}\geq 2,\,\beta_1,\ldots, \beta_{k}\ge 1$
		be  integers,
		$$
		n=\sum\limits_{j=1}^{m}\alpha_j+\sum\limits_{i=1}^{k}\beta_i, \ \ \ \
		M=\sum\limits_{j=1}^{m}\alpha_j,
		$$
		and 
		$\cal M$ the subset of  $M_n$ consisting of all matrices $B$ 
		with pairwise different eigenvalues 
		$(\lambda_1, \ldots, \lambda_m,\, \mu_1, \ldots, \mu_k)$ of the multiplicities
		$\alpha_1,\ldots, \alpha_{m},\,\beta_1,\ldots, \beta_{k}$, correspondingly, 
		satisfying the conditions
		$$
		\dim(\text{Ker }(B - \lambda_i I)) > 1,\, 1\leq i \leq m, \text{ and }
		\dim(\text{Ker }(B - \mu_j I)) = 1,\, 1 \leq j \leq k.
		$$
		Then 
		the following statements are true.
		
		\begin{enumerate} 
			\item  If $m\leq 1$, then all matrices in $\cal M$ are integrable. 
			
			\item   If\, $2 \leq m \leq n - M + 1$,  then $\cal M$ contains both integrable and non-integrable matrices.

			\item  If $m > n - M + 1$, then all matrices in $\cal M$ are non-integrable.

		\end{enumerate} 
		In particular, $\cal M$ contains an integrable matrix if and only if $m\le n-M+1$.
		
	\end{theorem}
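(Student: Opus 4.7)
The plan is to reduce the matrix statement to the polynomial statement of Theorem \ref{THM:main} via Theorem \ref{Cor:THM:general_integrable_iff_fullintegral}, by observing a simple two-way correspondence between matrices in $\cal M$ and polynomials of the form \eqref{eq:f}. First, for any $B\in \cal M$, the characteristic polynomial $p_B(x)$ has exactly the form \eqref{eq:f} with $\{a_1,\ldots,a_m\}=\{\lambda_1,\ldots,\lambda_m\}$ and $\{b_1,\ldots,b_k\}=\{\mu_1,\ldots,\mu_k\}$, and the set $S$ from Theorem \ref{Cor:THM:general_integrable_iff_fullintegral} is precisely $\{\lambda_1,\ldots,\lambda_m\}$. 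Since an antiderivative of $p_B$ is determined up to a constant, the condition ``$\int p_B(x)\,{\rm d}x$ takes the same value on all elements of $S$'' is equivalent to the existence of an antiderivative that vanishes on $S$, i.e. to $S$-full integrability of $p_B$ in the sense of the definition preceding Theorem \ref{THM:main}. Thus $B$ is integrable if and only if $p_B$ is $S$-full integrable.

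Next I establish the reverse realization: for every choice of pairwise distinct $\lambda_1,\ldots,\lambda_m,\mu_1,\ldots,\mu_k \in \K$ and every polynomial $f$ of the form \eqref{eq:f} with these roots and the prescribed multiplicities, there is a matrix $B\in\cal M$ with $p_B=f$. Concretely, take $B$ to be block-diagonal with a Jordan block $J_{\beta_j}(\mu_j)$ for each $j=1,\dots,k$ (geometric multiplicity $1$) and with two Jordan blocks $J_{\alpha_i-1}(\lambda_i)\oplus J_{1}(\lambda_i)$ for each $i=1,\dots,m$ (geometric multiplicity $2>1$, noting $\alpha_i\ge 2$). This $B$ lies in $\cal M$ and has characteristic polynomial $f$. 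Combined with the previous paragraph, this yields the dictionary: ``$\cal M$ contains an integrable (resp.\ non-integrable) matrix with prescribed eigenvalues and multiplicities'' if and only if ``the corresponding polynomial $f$ of the form \eqref{eq:f} has (resp.\ fails to have) an $S$-full integral''.

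With this dictionary in hand, the three parts of the theorem follow by direct case analysis on $m$ using Theorem \ref{THM:main}. Part 1 ($m\le 1$): parts 1 and 2 of Theorem \ref{THM:main} show that every such $f$ is $S$-full integrable, so every $B\in\cal M$ is integrable. Part 3 ($m>n-M+1$): part 4 of Theorem \ref{THM:main} shows that no such $f$ is $S$-full integrable, so no $B\in\cal M$ is integrable. Part 2 ($2\le m\le n-M+1$): part 3 of Theorem \ref{THM:main} produces two polynomials $f_1$ and $f_2$ of the form \eqref{eq:f}, with $f_1$ $S$-full integrable and $f_2$ not; the realization construction above turns these into matrices $B_1,B_2\in\cal M$ with $B_1$ integrable and $B_2$ non-integrable. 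The final ``in particular'' statement is then immediate.

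The main obstacle has already been overcome in the preceding theorems: the Shabat-polynomial and conservative-polynomial machinery feeding Theorem \ref{THM:main}(3) does all the nontrivial work, and the matrix-to-polynomial translation via Theorem \ref{Cor:THM:general_integrable_iff_fullintegral} together with the Jordan-block realization are routine. The only point worth double-checking is that the Jordan structure chosen above indeed produces geometric multiplicity exactly $1$ on the $\mu_j$'s and strictly greater than $1$ on the $\lambda_i$'s, which is immediate from the block decomposition.
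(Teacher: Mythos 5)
Your proposal is correct and follows essentially the same route as the paper: reduce integrability of $B\in\cal M$ to $S$-full integrability of $p_B$ via Theorem~\ref{Cor:THM:general_integrable_iff_fullintegral} and then invoke the case analysis of Theorem~\ref{THM:main}. The only difference is that you spell out the Jordan-block realization $J_{\alpha_i-1}(\lambda_i)\oplus J_1(\lambda_i)$ needed to turn the polynomials $f_1,f_2$ of Theorem~\ref{THM:main}(3) into matrices of $\cal M$ --- a routine step the paper's two-line proof leaves implicit, and which you verify correctly.
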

	
	Let us mention that applying Theorem~\ref{THM:matrix_int_classification} for  $m=0$ we obtain the result of \cite{IntegratorsOfMatricies} that each non-derogatory matrix is integrable. On the other hand, for diagonalizable matrices, Theorem~\ref{THM:matrix_int_classification} implies the main result of~\cite{DanielyanGuterman}, see Theorem~\ref{THM:recall_diag_matrix_int_classification}.
	
	The paper is organized as follows. 
	In Section 3,   we firstly recall some  
	properties of plane trees,  Shabat polynomials, and conservative polynomials used in the paper. In particular, we discuss the relations between these classes of polynomials and  plane trees. Then, we prove  Theorem~\ref{THM:main}. 
	In Section 4, we prove Theorems \ref{Cor:THM:general_integrable_iff_fullintegral} and \ref{THM:matrix_int_classification} and some additional results. In particular, we prove   that if a matrix is  diagonalizable and integrable, then each matrix   with the same spectrum  is integrable.

	\section{$S$-full integrals of polynomials}
	\subsection{Plane trees}

	We recall that a {\it tree} is a connected graph without cycles,
	and a {\it plane tree} is a tree  embedded into the plane.
	Two plane trees $\lambda_1, \lambda_2$ are called equivalent
	if there exists an orientation preserving homeomorphism $\mu$
	of the plane such that $\mu(\lambda_1) = \lambda_2$.
	A trivial induction shows that a tree with $n$ edges has $n+1$ vertices.
	Let $\lambda$ be a plane tree and $(\gamma_1,\gamma_2,\dots, \gamma_{n+1})$
	the sequence  of valencies of vertices of $\lambda$.
	Since $\lambda$ has no loops, every edge of $\lambda$
	is adjacent exactly to two vertices of $\lambda$, implying that
	
	\begin{equation} \label{ga}
		\sum_{i=1}^{n+1}\gamma_i=2n.
	\end{equation}
	We will refer to this fact by saying that
	$(\gamma_1,\gamma_2,\dots, \gamma_{n+1})$ is a {\it partition} of $2n.$

	The following two lemmas are known (see Section 1.5.2 and Section 1.6.1 of
	\cite{lz} for more details and generalizations).
	We provide full proofs of these results since their ideas
	are of importance for our proof
	of Theorem \ref{THM:main}.
	
	\begin{lemma} \label{lemder1} Let $n$ be a positive integer.
		Then for any partition $(\gamma_1,\gamma_2,\dots, \gamma_{n+1})$ of $2n$
		there exists a plane tree $\lambda$ with $n$ edges and the sequence
		of valencies of vertices $(\gamma_1,\gamma_2,\dots, \gamma_{n+1})$.
	\end{lemma}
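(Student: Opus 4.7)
The plan is to prove the existence of a plane tree with prescribed valency partition by induction on the number of edges $n$. The underlying combinatorial structure (a partition of $2n$ into exactly $n+1$ positive parts) is itself the image of the handshake identity, so the induction will strip off a leaf and use the hypothesis on a smaller partition of the same type.

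For the base case $n = 1$, the only partition of $2$ into two positive parts is $(1,1)$, and it is realized by the single-edge tree whose two vertices have valency one.

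For the inductive step, suppose $n \ge 2$ and the statement is known for $n-1$. Given a partition $(\gamma_1, \ldots, \gamma_{n+1})$ of $2n$, the first observation is that at least one $\gamma_i$ equals $1$: otherwise each part is at least $2$, forcing $\sum \gamma_i \ge 2(n+1) = 2n+2$, contradicting \eqref{ga}. Reordering if necessary, we may assume $\gamma_{n+1} = 1$. The second observation is that at least one $\gamma_j$ is at least $2$: if every part equaled $1$, then $\sum \gamma_i = n+1 < 2n$ since $n \ge 2$. Pick such an index $j \le n$ and form the reduced sequence
\[
(\gamma_1, \ldots, \gamma_{j-1}, \gamma_j - 1, \gamma_{j+1}, \ldots, \gamma_n),
\]
which has $n$ entries, all positive, summing to $2n - 2 = 2(n-1)$, so it is a partition of $2(n-1)$. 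By the inductive hypothesis there is a plane tree $\lambda'$ with $n-1$ edges realizing this sequence as its valency list. Now attach a new edge to $\lambda'$ at the vertex of valency $\gamma_j - 1$, placing the new edge anywhere in the cyclic order of edges around that vertex, and declare the other endpoint of the new edge to be a new leaf. The result is a plane tree $\lambda$ with $n$ edges whose valency sequence is exactly $(\gamma_1, \ldots, \gamma_n, 1) = (\gamma_1, \ldots, \gamma_{n+1})$, completing the induction.

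There is no real obstacle here; the only point that needs care is the existence of a $\gamma_j \ge 2$ that we can legitimately decrement, and this is precisely the pigeonhole argument above, valid for $n \ge 2$. The construction of the attaching step uses only the freedom to insert a new edge into the rotation at a vertex of a plane tree, which is well-defined up to the equivalence of plane trees introduced before the lemma.
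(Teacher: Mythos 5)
Your proof is correct and follows essentially the same route as the paper: induction on $n$, using the handshake count \eqref{ga} to locate a part equal to $1$ and a part at least $2$, deleting the leaf and decrementing the larger part to get a partition of $2(n-1)$, then gluing an extra edge back onto the vertex of valency $\gamma_j-1$. Your write-up is slightly more explicit about the two pigeonhole bounds and the freedom of insertion into the rotation at a vertex, but there is no substantive difference.
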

	\begin{proof} The proof is by induction on $n$. For $n=1$ the statement is
		clearly true. To prove the inductive step, we observe that for $n>1$ 
		equality \eqref{ga} implies that at least one element of
		$(\gamma_1,\gamma_2,\dots, \gamma_{n+1})$ is equal to 1 and
		at least one does not. Assuming that $\gamma_1=1,$ $\gamma_2>1$,
		let us consider the partition $(\gamma_2-1,\gamma_3,\dots ,\gamma_{n+1})$
		of the number $2(n-1).$ By the induction assumption, there exists a tree
		$\lambda'$ with   $n-1$ edges and the sequence of valencies of vertices
		$(\gamma_2-1,\gamma_3,\dots ,\gamma_{n+1})$. To obtain now a required tree, 
		$\lambda$ it is enough to glue an extra edge to the vertex of valency
		$\gamma_2-1$ of $\lambda'.$
	\end{proof}
	
	In this paper, instead of ordinary plane trees, we  consider
	bicolored plane trees.
	By definition, a {\it bicolored plane tree} is a plane tree whose
	vertices are colored in
	black and white colors in such a way that any edge connects vertices of
	different colors. Two bicolored plane trees $\lambda_1, \lambda_2$
	are called {\it  equivalent}
	if they are equivalent as plane trees and the corresponding homeomorphism $\mu$
	preserves the  colors of vertices. Any plane tree $\lambda$
	can be ``bicolored'' by choosing one of two possible colorings for
	an arbitrary vertex of $\lambda$
	and expanding coloring to the remaining vertices.
	
	One can easily see that if $(\alpha_1, \ldots, \alpha_p)$ and
	$(\beta_1, \ldots, \beta_q)$ are the sequences of valencies of white
	and black vertices of a bicolored plane tree with $n$ edges,
	then the equalities
	\begin{equation} \label{pq1}
		\sum_{i = 1}^{p} \alpha_j = \sum_{i = 1}^{q} \beta_i = n
	\end{equation}
	and
	\begin{equation} \label{pq2}
		p + q = n + 1
	\end{equation}
	holds. In turn, the analogue of Lemma \ref{lemder1} is the following statement.
	
	\begin{lemma}  \label{lemder2}
		Let $n$ be a positive integer.
		Then for any partitions
		$(\alpha_1, \ldots, \alpha_p)$ and
		$(\beta_1, \ldots, \beta_q)$ of  $n$ such that  $p + q = n + 1$
		there exists a bicolored plane tree $\lambda$ with $n$ edges
		and the valency sequences of white and black vertices 
		$(\alpha_1, \ldots, \alpha_p)$ and
		$(\beta_1, \ldots, \beta_q)$.
	\end{lemma}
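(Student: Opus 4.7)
The plan is to adapt the inductive argument of Lemma \ref{lemder1} to the bicolored setting, doing induction on $n$ and pruning a leaf of one color whose neighbor has valency at least $2$, so that the resulting smaller tree still has both colors represented with consistent valency sequences.

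For the base case $n=1$, the identity $p+q = n+1$ forces $p=q=1$ and $\alpha_1 = \beta_1 = 1$, so a single edge with one white and one black endpoint works. For the inductive step, I would first observe, as in the proof of Lemma \ref{lemder1}, that \eqref{pq1} and \eqref{pq2} give
\[
\sum_{i=1}^{p}\alpha_i + \sum_{j=1}^{q}\beta_j = 2n \quad \text{while} \quad p+q = n+1,
\]
so the average valency equals $2n/(n+1) < 2$, forcing some valency to equal $1$. After relabelling, assume $\alpha_p = 1$.

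Next I would argue that for $n>1$ we can always find such a white leaf whose unique black neighbor has valency at least $2$. Indeed, if every $\beta_j = 1$, then $\sum_j \beta_j = q = n$, hence $p = 1$; but $\alpha_p = 1$ then gives $n = \alpha_1 = 1$, contradicting $n > 1$. So some $\beta_j \geq 2$, and after relabelling $\beta_q \geq 2$. Now consider the partitions
\[
(\alpha_1, \ldots, \alpha_{p-1}) \quad \text{and} \quad (\beta_1, \ldots, \beta_{q-1}, \beta_q - 1)
\]
of $n-1$, whose lengths sum to $(p-1) + q = n = (n-1)+1$. By the induction hypothesis there exists a bicolored plane tree $\lambda'$ with $n-1$ edges realizing these valency sequences. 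Gluing an extra edge carrying a new white leaf (of valency $1$) to the black vertex of valency $\beta_q - 1$ in $\lambda'$ produces the required bicolored plane tree $\lambda$ with $n$ edges and the prescribed valency sequences.

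The only subtlety, and the main thing to get right, is the argument that some $\beta_j \geq 2$ whenever a white leaf exists (and symmetrically); without this the reduction to $n-1$ would either destroy a color class or collapse the tree. The counting above handles this cleanly, and the symmetric case (starting from some $\beta_j = 1$ instead) is identical after swapping the roles of the two color classes, so no further case analysis is needed.
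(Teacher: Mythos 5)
Your proof is correct and follows essentially the same route as the paper's: induction on $n$, locating a leaf (a part equal to $1$) in one color class and a part $\geq 2$ in the other, passing to the reduced pair of partitions of $n-1$, and gluing back an extra edge. Your counting argument that some $\beta_j \geq 2$ must exist (via $q = n$, $p = 1$, $n = 1$ otherwise) just makes explicit a step the paper states without detail, and the color-swap symmetry disposes of the remaining case exactly as in the paper.
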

	\begin{proof}
		As above, the proof is by induction on $n$. For $n=1$ the statement is  true.
		To prove the inductive step we observe that
		if $n>1$, then \eqref{pq1} and \eqref{pq2} still imply that 
		at least one of the numbers $\alpha_j,$ $\beta_i$ is equal to one. Furthermore, if, say, $\alpha_1=1$, then 
		it follows from \eqref{pq1} and \eqref{pq2}
		that at least one of the numbers $\beta_i$, say $\beta_1$, is greater than one.
		By the induction assumption, there exists a bicolored plane tree $\lambda'$
		with $n-1$ edges and the sets of valencies of white and black vertices
		$(\alpha_2, \ldots, \alpha_p)$ and
		$(\beta_1-1, \ldots, \beta_q)$.
		Gluing now an extra edge to the vertex of valency $\beta_1-1$ of $\lambda'$,
		we obtain a required tree $\lambda$. If instead of  $\alpha_1$
		one of the numbers $\beta_i$ is equal to one,
		then the proof is obtained by an obvious modification.
	\end{proof}

	The main result of this section, which is used in the proof of
	Theorem \ref{THM:main}, is the following variation of the above lemmas.
	
	\begin{lemma} \label{lemder3}
		Let $n$ be a positive integer and $(\gamma_1,\gamma_2,\dots, \gamma_{n+1})$ be
		a partition  of $2n$. Assume that for an integer $l,$ $1\leq l\leq n,$
		the inequality
		\begin{equation} \label{gam}
			\gamma_{1}+\gamma_{2}+\dots +\gamma_{l}\leq n
		\end{equation}
		holds. Then there exist $p\ge l$
		and a bicolored plane tree  $\lambda$  with $n$ edges
		and the sequences of valencies of white and black vertices
		$(\alpha_1, \ldots, \alpha_p)$ and
		$(\beta_1, \ldots, \beta_q)$ such that:
		$$
		(\gamma_1,\gamma_2,\dots, \gamma_{n+1})=
		(\alpha_1, \ldots, \alpha_p,\beta_1, \ldots, \beta_q).
		$$
	\end{lemma}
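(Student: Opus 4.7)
My plan is to reduce Lemma \ref{lemder3} to Lemma \ref{lemder2} via a subset-sum argument. Setting $R := n - (\gamma_1 + \cdots + \gamma_l) \geq 0$, I will exhibit a subset $J \subseteq \{l+1, \ldots, n+1\}$ with $\sum_{j \in J} \gamma_j = R$. Once $J$ is found, the set $I := \{1, \ldots, l\} \cup J$ will satisfy $|I| \geq l$ and $\sum_{i \in I} \gamma_i = n$, so $(\gamma_i)_{i \in I}$ and $(\gamma_j)_{j \notin I}$ will be two partitions of $n$ whose total length is $n+1$; Lemma \ref{lemder2} then furnishes a bicolored plane tree $\lambda$ with $n$ edges whose white and black valency multisets are precisely these two, producing the required decomposition with $p := |I|$.

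I will construct $J$ by induction on $n$, adopting the standard convention that a partition is written in non-increasing order $\gamma_1 \geq \gamma_2 \geq \cdots \geq \gamma_{n+1}$. The base $n = 1$ is immediate, and the case $R = 0$ is handled by taking $J = \emptyset$. When $R \geq 1$, I first observe that $\gamma_{l+1} \geq 2$: otherwise $\gamma_{l+1} = \cdots = \gamma_{n+1} = 1$, so the tail sum $n+1-l$ equals $n + R$, forcing $R \leq 0$. Let $j$ be the largest index with $\gamma_j \geq 2$, so $j > l$ and $\gamma_{j+1} = \cdots = \gamma_{n+1} = 1$. I then form the reduced partition $\gamma'$ of $2(n-1)$ into $n$ parts by decrementing $\gamma_j$ by $1$ and deleting $\gamma_{n+1}$; non-increasingness is preserved because $\gamma_{j-1} \geq \gamma_j > \gamma_j - 1 \geq 1 = \gamma_{j+1}$, and the hypothesis survives with the same $l$ since $\sigma'_l = \sigma_l \leq n - 1$. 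The inductive hypothesis supplies $J' \subseteq \{l+1, \ldots, n\}$ with $\sum_{i \in J'} \gamma'_i = R - 1$, which I lift to $J$ by setting $J := J'$ when $j \in J'$ (the missing unit is restored via $\gamma_j = \gamma'_j + 1$) and $J := J' \cup \{n+1\}$ otherwise (the missing unit comes from $\gamma_{n+1} = 1$). A direct calculation confirms $\sum_{i \in J} \gamma_i = R$ in both cases.

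The main hurdle will be engineering the inductive reduction so that (i) $n$ strictly decreases, (ii) the non-increasing property is preserved without having to re-sort, and (iii) $\sigma_l$ remains unchanged so the hypothesis carries over with the same $l$. Choosing $j$ as the latest index with $\gamma_j \geq 2$ secures (ii), the condition $j > l$ secures (iii), and the existence of such $j$ whenever $R \geq 1$ is the key preliminary observation that drives the induction.
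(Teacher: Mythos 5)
Your overall architecture --- isolate a purely numerical subset-sum statement, then invoke Lemma~\ref{lemder2} once as a black box --- is sound and in fact a cleaner modularization than the paper's proof, which never separates the numerics from the geometry but instead reruns the leaf-gluing induction of Lemmas~\ref{lemder1} and~\ref{lemder2} directly on trees. Your induction on $n$ for the sorted subset-sum claim is also correct, up to two implicit one-liners: the largest index $j$ with $\gamma_j\ge 2$ satisfies $j\le n$ (otherwise the total would be at least $2n+2$), which is what licenses deleting $\gamma_{n+1}=1$; and $R\ge 1$ gives $l\le\sigma_l\le n-1$, so the inductive call with the same $l$ is legal. The genuine gap is the opening move, ``adopting the standard convention that a partition is written in non-increasing order.'' Hypothesis~\eqref{gam} is positional --- it bounds the sum of the \emph{first} $l$ entries of the given sequence --- and it is not invariant under sorting: after sorting non-increasingly, the first $l$ entries are the $l$ largest, so you have silently strengthened the hypothesis and therefore proved a strictly weaker lemma. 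This weakening matters for the paper's only use of the lemma: in Corollary~\ref{COR:shabat_with_certain_mult} the input sequence~\eqref{gam2} is in general not non-increasing, and its distinguished first $m$ entries need not be the $m$ largest (an entry $\beta_i+1$ may exceed an entry $\alpha_j+1$), so your sorted version cannot be substituted there.

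Worse, the gap is unfixable, because Lemma~\ref{lemder3} as literally stated is false. Take $n=3$, $(\gamma_1,\gamma_2,\gamma_3,\gamma_4)=(1,1,2,2)$, $l=2$: hypothesis~\eqref{gam} holds since $1+1\le 3$, but in a bicolored plane tree with $3$ edges the white valencies sum to $3$, while any white class containing both entries $\gamma_1=\gamma_2=1$ has valency sum $2$, $4$, or $6$; in your language, no $J\subseteq\{3,4\}$ satisfies $\sum_{j\in J}\gamma_j=R=1$. (Concretely, the only tree with valency multiset $\{1,1,2,2\}$ is the path, whose two color classes are both $\{1,2\}$.) The same example locates a matching lacuna in the paper's own inductive step: in case (a), the extra edge is glued to the vertex of valency $\gamma_{l+1}-1$, and the new leaf --- which accounts for $\gamma_1=1$, one of the first $l$ entries, and hence must be white --- receives the color \emph{opposite} to that vertex; nothing in the inductive hypothesis forces that vertex to be black, and on $(1,1,2,2)$ it is forced to be white. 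So your sorting instinct is precisely what makes the numerical claim true, but it proves a different statement. A repair that serves the paper would be to assume outright what your reduction needs --- that some subsequence of $(\gamma_{l+1},\dots,\gamma_{n+1})$ sums to $n-(\gamma_1+\cdots+\gamma_l)$ --- and to verify this for the sequences~\eqref{gam2}, where it does hold: every tail entry $\beta_i+1$ is at most the number of trailing $1$'s, so a greedy ascending choice cannot jump over the target window. Note finally that, like the paper (which writes in the proof of Corollary~\ref{COR:shabat_with_certain_mult} that the white valencies ``contain'' the first block), even your construction delivers the displayed sequence equality only after permuting the entries in positions $>l$, since your $I$ need not be an initial segment; given how the lemma is used, this shared imprecision is harmless.
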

	\begin{proof} The proof is  by induction on $n$. 
		For $n=1$ the statement is  true. Furthermore, it follows from Lemma~\ref{lemder1} that if $l=1$, then it is true for any $n$,  
		since any plane tree can be bicolored. 
		
		To prove the inductive step in case $l>1$, let us observe first that conditions of the theorem imply that at least one of the following conditions   holds: 
		
		(a):   the sequence
		$ (\gamma_1,\gamma_2,\dots, \gamma_{l})$ contains 1 and
		$ (\gamma_{l+1},\gamma_{l+2},\dots, \gamma_{n+1})$
		contains an element different from 1, 
		
		(b):   the sequence $ (\gamma_{l+1},\gamma_{l+2},\dots, \gamma_{n+1})$
		contains 1 and
		$ (\gamma_1,\gamma_2,\dots, \gamma_{l})$
		contains an element different from 1.
		
		Indeed, $1$ belongs to the set $(\gamma_1,\gamma_2,\dots, \gamma_{n+1})$ and hence belongs to at least one of the sets  $(\gamma_1,\gamma_2,\dots, \gamma_{l})$ and $(\gamma_{l+1},\gamma_{l+2},\dots, \gamma_{n+1})$. 
		If $1\in (\gamma_1,\gamma_2,\dots, \gamma_{l})$
		and $(\gamma_{l+1},\gamma_2,\dots, \gamma_{n+1})$
		contains a non-unit, then (a) takes place.
		On the other hand, if all elements of the set  
		$(\gamma_{l+1},\gamma_{l+2},\dots, \gamma_{n+1})$
		are units, then necessarily at least one of the elements of the set 
		$(\gamma_1,\gamma_2,\dots, \gamma_{l})$
		is not a unit, so that (b) takes place.
		In case $1\in (\gamma_{l+1},\gamma_{l+2},\dots, \gamma_{n+1})$,
		the proof is similar.
		
		In case (a) holds,  
		the inductive step goes as follows. 
		Without loss of generality we may assume that
		$\gamma_1=1$ and $\gamma_{l+1}>1.$ 
		Defining now a partition of the integer $2(n-1) $ by the formula
		
		\begin{equation} \label{parti}
			(\gamma_2,\dots,\gamma_{l},\gamma_{l+1}-1,\gamma_{l+2},\dots , \gamma_{n+1})
		\end{equation}
		and observing that \eqref{gam} implies that
		
		\[
		\gamma_2+\gamma_{3}+\dots +\gamma_{l}\leq n-1,
		\]
		we conclude  
		by the induction assumption that there exists a bicolored plane tree
		$\lambda'$ with $n-1$ edges, whose sequence of valencies of vertices
		coincides with \eqref{parti}, and whose sequence of valencies
		of white vertices ``contains''
		$(\gamma_2,\dots, \gamma_{l})$.
		Therefore, gluing an extra edge to the vertex of valency $\gamma_{l+1}-1$ of
		$\lambda'$  we  obtain  a required tree~$\lambda$.  In case (b) holds,  the proof  
		is obtained by an obvious modification.
	\end{proof}

	\subsection{Shabat polynomials.}
	Let $P(z)$ be a complex polynomial. We recall that zeros
	$w_1, \ldots, w_n$ of $P'(x)$
	are called (finite) {\it critical points} of $P(z)$ and the values 
	$P(w_1),\ldots,P(w_n)$ are called
	(finite) {\it critical values} of~$P(z).$

	\begin{defin}
		A complex polynomial $P(z)$ is called a {\it Shabat polynomial}
		if it has at most
		two (finite) critical values. Two Shabat polynomials $P_1(z), P_2(z)$
		are called {\it equivalent} if
		there exist polynomials $\mu_1$ and $\mu_2$ of degree one such that
		$P_2= \mu_1\circ P_1 \circ \mu_2 :=\mu_1(P_1(\mu_2))$. 
	\end{defin}
	
	Notice that by choosing an appropriate polynomial $\mu_1$ it is always
	possible to assume that critical values of $P(z)$ are $0$ and $1$.
	
	The following statement is a particular case of the correspondence
	between Belyi pairs and ``dessins d'enfants"
	(see \cite{lz}, \cite{sz}, \cite{ShabVoe} for more detail).

	\begin{theorem}  \label{THM:shabat_tree}
		There is a bijective correspondence between the equivalence classes of Shabat
		polynomials and the equivalence classes of bicolored plane trees.
	\end{theorem}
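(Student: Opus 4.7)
The plan is to construct explicit mutually inverse maps between Shabat polynomials and bicolored plane trees, and then to verify that they descend to the respective equivalence relations.

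For the forward direction, given a Shabat polynomial $P(z)$, I would first normalize by choosing $\mu_1$ so that the finite critical values of $P$ are $0$ and $1$, and define
\[
\lambda_P \,:=\, P^{-1}([0,1]) \subset \C,
\]
with white vertices $P^{-1}(0)$, black vertices $P^{-1}(1)$, and edges the connected components of $P^{-1}((0,1))$. Since no critical point lies over $(0,1)$, each such component maps homeomorphically onto the open interval, and the valency at a white (resp.\ black) vertex $z_0$ equals the multiplicity of $z_0$ as a zero of $P$ (resp.\ of $P-1$). To verify that $\lambda_P$ is a bicolored plane tree, I would view $P$ as a branched cover $S^2 \to S^2$: since $\infty$ is the only pole, the restriction $P : S^2 \setminus \lambda_P \to S^2 \setminus [0,1]$ is a branched cover of degree $n = \deg P$ with a single ramification point of full index $n$ at $\infty$. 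As $S^2 \setminus [0,1]$ is topologically an open disk, so is its preimage, so the complement of $\lambda_P$ in $S^2$ is a single face, and Euler's formula forces $\lambda_P$ to be connected and acyclic with $n+1$ vertices and $n$ edges.

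For the reverse direction, given a bicolored plane tree $\lambda$ with valency sequences $(\alpha_1,\ldots,\alpha_p)$ and $(\beta_1,\ldots,\beta_q)$ satisfying \eqref{pq1}--\eqref{pq2}, I would invoke the Riemann existence theorem to produce a branched covering $P : \mathbb{CP}^1 \to \mathbb{CP}^1$ of degree $n$ branched only over $\{0,1,\infty\}$, with ramification profiles $(\alpha_i)$ over $0$, $(\beta_j)$ over $1$, and a single ramification point of index $n$ over $\infty$, whose monodromy realizes the cyclic orderings of edges around the vertices of $\lambda$. A rational map with one full-multiplicity preimage of $\infty$ is automatically a polynomial of degree $n$, and with at most two finite critical values it is a Shabat polynomial whose associated tree is equivalent to $\lambda$. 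Finally, if $P_2 = \mu_1 \circ P_1 \circ \mu_2$ is an equivalence of Shabat polynomials (arranging $\mu_1$ to preserve $\{0,1\}$ up to the swap of colors), then $\mu_2^{-1}$ is an orientation-preserving affine homeomorphism of $\C$ carrying $\lambda_{P_1}$ onto $\lambda_{P_2}$ while respecting colors, so equivalent Shabat polynomials yield equivalent bicolored plane trees; the converse is similar.

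The main obstacle is the reverse direction: passing from the purely combinatorial datum of a bicolored plane tree to an actual holomorphic polynomial cannot be done by elementary or inductive means and requires genuinely transcendental input, namely the Riemann existence theorem (equivalently, uniformization for compact Riemann surfaces). Lemma \ref{lemder2} only guarantees the existence of an abstract bicolored plane tree with prescribed valencies; the deep content of Theorem \ref{THM:shabat_tree} is that every such tree is realized by an essentially unique Shabat polynomial, so that distinct equivalence classes of trees produce distinct equivalence classes of polynomials.
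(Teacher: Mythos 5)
The paper does not actually prove this theorem: it is quoted as a particular case of the correspondence between Belyi pairs and dessins d'enfants, with proofs delegated to \cite{lz}, \cite{sz}, \cite{ShabVoe}, and the text merely recalls the two constructions. Your sketch is essentially the standard proof from those references, and it is sound. The forward direction coincides with the construction the paper recalls ($\lambda_P=P^{-1}([0,1])$), and your topological argument is the right one: the unique point over $\infty$ has local degree $n$, so the complement of $\lambda_P$ in $S^2$ is a single face, and the Euler count $V-E+F$ with $V=n+1$, $E=n$, $F=1$ forces connectivity and acyclicity. Your reverse direction correctly isolates the genuinely transcendental input, the Riemann existence theorem; note that the paper's system \eqref{sys} is used there only to conclude that the coefficients are algebraic, not to prove existence, so your remark that the combinatorial Lemma \ref{lemder2} cannot substitute for this step is exactly right.

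Two points should be made explicit. First, to apply the Riemann existence theorem with a single point of full index $n$ over $\infty$ you must check that the product of the two vertex-rotation permutations encoding $\lambda$ is an $n$-cycle; this is equivalent to the embedded bicolored graph being a plane tree (one face, genus zero) and deserves a sentence, as does transitivity of the monodromy group, which follows from connectedness of $\lambda$. Second, there is a color-swap subtlety that your parenthetical ``up to the swap of colors'' acknowledges but does not resolve: under the paper's definition of equivalence, $P$ and $1-P$ are equivalent Shabat polynomials (take $\mu_1(w)=1-w$), yet $\lambda_{1-P}$ is $\lambda_P$ with the two colors interchanged, which is in general \emph{not} equivalent to $\lambda_P$ as a bicolored tree. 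An honest bijection therefore requires either ordering the pair of critical values (equivalently, restricting $\mu_1$ to fix $0$ and $1$) or enlarging the equivalence of trees to allow color swaps. This imprecision is inherited from the statement as given in the paper and is resolved in exactly this way in the cited references, so it does not undermine your argument, but a complete write-up should address it.
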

	We briefly recall how
	this correspondence is constructed. 
	Let $P(x)$ be a
	Shabat polynomial with critical values $0$ and $1$.
	Then the corresponding plane tree $\lambda_P$ is  defined as the preimage
	$\lambda_P=P^{-1}([0,1])$ of the segment $[0,1]$ with respect to the function
	$P(x)\,:\,\C\rightarrow \C$. By definition, white (resp., black)
	vertices of $\lambda_P$ are preimages of the point $0$ (resp., of the point 1)
	and edges of $\lambda_P$ are preimages of the segment~$[0,1]$.

	In the other direction, if   $\lambda$ is a bicolored tree with $n$ edges
	and the sequences
	of valencies of white and  of black vertices 
	$\alpha_1,\ldots,\alpha_p$ and
	$\beta_1,\ldots,\beta_q,$ correspondingly, then
	the corresponding Shabat polynomial $P(x)\in \C[x]$
	with critical values 0 and 1 is defined by 
	the conditions
	\[
	\begin{cases}
		P(x) = c(x - a_1)^{\alpha_1}\ldots(x - a_p)^{\alpha_p}\\
		P(x) - 1 = c(x - b_1)^{\beta_1}\ldots(x - b_q)^{\beta_q},
	\end{cases}
	\]
	where $a_1,\ldots, a_p,\,b_1,\ldots,b_q \in \C$ are pairwise distinct and $c\in \C$ is distinct from zero. 
	Thus, a system that determines
	a Shabat polynomial of a tree is a  system of polynomial equations with the unknowns
	$a_1,\ldots, a_p,\,b_1,\ldots,b_q ,c$ obtained 
	from equating coefficients of like terms   
	in the equality
	\begin{equation} \label{sys}
		c(x - a_1)^{\alpha_1}\ldots(x - a_p)^{\alpha_p}
		- 1 = c(x - b_1)^{\beta_1}\ldots(x - b_q)^{\beta_q}. 
	\end{equation}
	Notice that there could be
	{\it several} trees with the  same sequences
	$\alpha_1,\ldots,\alpha_p$ and $\beta_1,\ldots,\beta_q$.
	All corresponding Shabat polynomials satisfy the same system~\eqref{sys}. 
	
	After fixing critical values of a Shabat polynomial, we still have a ``degree of freedom'' corresponding to a choice of  
	$\mu_2$. Thus, we can impose some further restrictions on
	system \eqref{sys}. For example, we can assume that $a_1=0$ and $b_1=1$.
	Theorem \ref{THM:shabat_tree}  implies that in this case the system
	\eqref{sys} has only {\it finitely many} solutions.
	Since  \eqref{sys}
	provide us with equations in $a_1,\ldots, a_p,\,b_1,\ldots,b_q, c$
	with {\it rational } and even integer coefficients, this implies that
	solutions are necessarily {\it algebraic numbers}.
	Thus, for any plane tree the corresponding equivalence class of 
	Shabat polynomials contains polynomials with algebraic coefficients (see  \cite{lz}, \cite{sz}).

	Theorem \ref{THM:shabat_tree} combined with Lemma \ref{lemder3}
	allows us to prove
	the following statement, which is used for the proof
	of Theorem~\ref{THM:main}.

	\begin{cor} \label{COR:shabat_with_certain_mult}
		Let  $m,k\ge 1$   and $\alpha_1,\ldots, \alpha_{m}\geq 2,\,
		\beta_1,\ldots, \beta_{k}\ge 1$ be  integers, and 
		\[
		n=\sum\limits_{j=1}^{m}\alpha_j+\sum\limits_{i=1}^{k}\beta_i, \ \ \ \
		M=\sum\limits_{j=1}^{m}\alpha_j.
		\]
		Assume that $m \leq n - M + 1$.
		Then there exist pairwise distinct
		$a_1,\ldots, a_m,\, b_1,\ldots, b_k \in \overline{\mathbb Q}
		\subseteq \mathbb K$
		and a Shabat polynomial $P(x)\in \overline{\mathbb Q}[x]
		\subseteq \mathbb K[x]$ of
		degree $n+1$  such that
		\begin{equation} \label{con1}
			P'(x)=(x - a_1)^{\alpha_1}\ldots(x - a_m)^{\alpha_m}
			(x - b_1)^{\beta_1}\ldots(x - b_k)^{\beta_k}
		\end{equation}
		and 
		\begin{equation} \label{con2}
			P(a_1) = \ldots = P(a_m)=0. 
		\end{equation}
	\end{cor}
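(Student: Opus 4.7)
The strategy is to realize the desired $P(x)$ as a Shabat polynomial through the bicolored plane tree correspondence.  To set things up, I would form the partition
\[
\gamma=(\alpha_1+1,\alpha_2+1,\ldots,\alpha_m+1,\ \beta_1+1,\ldots,\beta_k+1,\ \underbrace{1,\ldots,1}_{n+2-m-k})
\]
of $2(n+1)$ into $n+2$ parts (the number of $1$'s is nonnegative since $n\geq 2m+k$).  This is designed to be the valency sequence of the desired tree with $n+1$ edges: each non-leaf vertex of valency $\alpha_j+1$ or $\beta_i+1$ will correspond to a zero of $P$ (or $P-1$) of that multiplicity, hence to a critical point of $P$ of multiplicity $\alpha_j$ or $\beta_i$, while the leaves correspond to simple, non-critical zeros of $P$ or $P-1$.

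\textbf{Step 1 (combinatorial).}  Since the condition $P(a_j)=0$ forces the vertices of valency $\alpha_j+1$ to be \emph{white}, I would list these first in $\gamma$ and apply Lemma~\ref{lemder3} with $l=m$ (and with its ``$n$'' equal to $n+1$).  The hypothesis of that lemma,
\[
\gamma_1+\cdots+\gamma_m=M+m\leq n+1,
\]
is precisely the assumption $m\leq n-M+1$ of the corollary.  Lemma~\ref{lemder3} then delivers a bicolored plane tree $\lambda$ with $n+1$ edges and valency sequence $\gamma$, whose white vertices include those of valencies $\alpha_1+1,\ldots,\alpha_m+1$.

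\textbf{Step 2 (analytic).}  Theorem~\ref{THM:shabat_tree} applied to $\lambda$ produces a Shabat polynomial $Q(x)$ of degree $n+1$ with critical values $0$ and $1$ such that $\lambda=Q^{-1}([0,1])$.  The prescribed white vertices become zeros $\tilde a_1,\ldots,\tilde a_m$ of $Q$ with $Q(\tilde a_j)=0$, while the remaining non-leaf vertices give $k$ critical points $\tilde b_1,\ldots,\tilde b_k$; the leaves are non-critical.  Consequently,
\[
Q'(x)=c\prod_{j=1}^{m}(x-\tilde a_j)^{\alpha_j}\prod_{i=1}^{k}(x-\tilde b_i)^{\beta_i}
\]
for some nonzero constant $c$, and by the discussion following \eqref{sys} one may choose $Q$ with coefficients in $\overline{\mathbb Q}$.

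\textbf{Step 3 (normalization).}  Finally, I would replace $Q$ by $P(x):=Q(\alpha x)$, where $\alpha\in\overline{\mathbb Q}$ is any solution of $c\,\alpha^{n+1}=1$.  Then $P$ is again a Shabat polynomial over $\overline{\mathbb Q}$, $P'$ is monic with the required factorization (setting $a_j=\tilde a_j/\alpha$, $b_i=\tilde b_i/\alpha$), and $P(a_j)=Q(\tilde a_j)=0$ as required.  The main (and essentially only) obstacle is the combinatorial Step~1: one must realize the valency sequence $\gamma$ by a bicolored plane tree with the $a_j$'s forced to the white side, and this is the content of Lemma~\ref{lemder3}, whose hypothesis matches the bound $m\leq n-M+1$ exactly.
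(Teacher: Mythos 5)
Your proposal is correct and follows essentially the same route as the paper: the same partition $(\alpha_1+1,\ldots,\alpha_m+1,\beta_1+1,\ldots,\beta_k+1,1,\ldots,1)$ of $2(n+1)$, the same application of Lemma~\ref{lemder3} with $l=m$ (whose hypothesis $M+m\leq n+1$ is exactly $m\leq n-M+1$), and the same passage through Theorem~\ref{THM:shabat_tree} with the degree count $\deg Q'=n$ forcing the factorization of $Q'$. The only (immaterial) difference is the final normalization: you make $P'$ monic by the horizontal rescaling $P(x)=Q(\alpha x)$ with $c\alpha^{n+1}=1$, whereas the paper simply divides, setting $P=\widetilde P/(c(n+1))$ — both preserve $P(a_j)=0$ and algebraicity of the coefficients.
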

	\begin{proof} Let us set
		\begin{equation} \label{gam2}
			(\gamma_1,\gamma_2,\dots, \gamma_{n+2})=
			(\alpha_1 + 1,\ldots, \alpha_m + 1, \beta_1 + 1, \ldots, \beta_k + 1,
			\underbrace{1, \ldots, 1}_{n - (m+k)+2}).
		\end{equation}
		It is easy to see that \eqref{gam2} is a partition of $2(n+1)$.
		Indeed, the definition of $n$ implies that $n\geq m+k$,
		so that \eqref{gam2} is well-defined. In addition,
		\[
		\sum\limits_{j=1}^{m}(\alpha_j+1) +
		\sum\limits_{i=1}^{k}(\beta_i+1) +
		n - (m+k)+2 =
		\sum\limits_{j=1}^{m}\alpha_j+\sum\limits_{i=1}^{k}\beta_i+n +2=2n+2.
		\]
		Since there are  $n+2$ elements in \eqref{gam2}  and
		\[
		\sum\limits_{j=1}^{m}(\alpha_j+1)=M+m\leq n+1
		\]
		by the condition,
		it follows from Lemma \ref{lemder3} that there exists a bicolored plane tree
		$\lambda$ with $n+1$ edges and sequences of white and black valencies
		$(\mu_1, \ldots, \mu_p)$ and
		$(\nu_1, \ldots, \nu_q)$, where $p\geq m,$  such that
		\[
		(\gamma_1,\gamma_2,\dots, \gamma_{n+2}) =
		(\mu_1, \ldots, \mu_p, \nu_1, \ldots, \nu_q), 
		\]
		\[
		(\alpha_{1}+1,\dots, \alpha_{m}+1)=(\mu_{1}, \ldots, \mu_m), 
		\]
		and
		\[
		( \beta_1 + 1, \ldots, \beta_k + 1,
		\underbrace{1, \ldots, 1}_{n - (m+k)+2})=(\mu_{m+1}, \ldots, \mu_p, \nu_1, \ldots, \nu_q).
		\]
		Applying   Theorem \ref{THM:shabat_tree}, we see that there exists
		a Shabat polynomial $\widetilde P(z)\in \overline{\mathbb Q}[z]$ of degree $n+1$ such that
		\begin{equation} \label{f1}
			\widetilde P(x) = c(x - x_1)^{\mu_1}\ldots(x - x_p)^{\mu_p}
		\end{equation}
		and
		\[
		\widetilde P(x) - 1 = c(x - y_1)^{\nu_1}\ldots(x - y_q)^{\nu_q}.
		\]
		for some pairwise distinct $x_1,\ldots, x_p,\, y_1,\ldots, b_q \in \overline{\mathbb Q}$
		and $0\neq c \in \overline{\mathbb Q}.$ 
		
		By construction, among the points $x_{m+1}, \dots,x_p,y_1,y_2,\dots,y_q$ there are exactly $k$ points that are zeros of $P'(x)$. 
		Denoting these points by $z_1,z_2,\dots ,z_k$ 
		and setting  
		$$
		(a_1,a_2,\dots,a_m)=(x_1,x_2,\dots,x_m) \ \ \mbox{ and }  \ \ 
		(b_1,b_2,\dots,b_k)= (z_1, z_2, \dots, z_k), 
		$$
		we see that
		$\widetilde P'(x)$ is divisible by
		\[
		(x - a_1)^{\alpha_1}\ldots(x - a_m)^{\alpha_m}
		(x - b_1)^{\beta_1}\ldots(x - b_k)^{\beta_k}.
		\]
		Moreover, since the degree of the last polynomial is $n$, the equality
		\begin{equation} \label{f2}
			\widetilde P'(x)=c(n+1)(x - a_1)^{\alpha_1}\ldots(x - a_m)^{\alpha_m}
			(x - b_1)^{\beta_1}\ldots(x - b_k)^{\beta_k}
		\end{equation}
		holds. Finally, it follows from \eqref{f1} and \eqref{f2} that for
		the Shabat polynomial
		\[
		P(x)=\frac{\widetilde P(x)}{c(n+1)}
		\]
		equalities \eqref{con1} and \eqref{con2} hold, and   $ P(x)\in \overline{\mathbb Q}[x].$
	\end{proof}
	
	\subsection{Conservative polynomials}
	In addition to a Shabat polynomial, with every plane tree one can
	associate a polynomial of a different type, described
	by the following definition.
	
	\begin{defin}
		A complex polynomial $C(x)$ is called {\it conservative} if all
		its critical points are fixed, that is,
		if the equality $C^{\prime}(\zeta)=0,$ $\zeta\in \C,$
		implies that $C(\zeta)=\zeta.$
		A conservative polynomials $C(x)$ is called {\it normalized}
		if $C(x)$ is monic and $C(0)=0.$
		Two conservative polynomials $C_1(x)$ and $C_2(x)$
		are called {\it equivalent} if
		there exists a complex polynomial $\mu$ of degree one
		such that $C_2=\mu^{-1}\circ C_1\circ \mu.$
	\end{defin}
	
	Conservative polynomials were introduced by
	Smale \cite{smale} in connection with his ``mean value conjecture".
	Motivated by Smale's conjecture Kostrikin proposed in \cite{kostr}
	several conjectures concerning conservative polynomials.
	In particular, he conjectured that the
	number of normalized conservative polynomials of degree $n$
	is finite and is equal to $C_{2n-2}^{n-1}.$
	This
	conjecture was proved by Tischler in the paper \cite{tishl}.
	In fact, he proved the following statement, implying the  Kostrikin conjecture, see \cite[Theorem 4.2]{tishl}.

	\begin{theorem}  \label{THM:tish}
		There is a bijective correspondence between the equivalence classes of
		conservative polynomials of degree $n$ and the equivalence classes
		of bicolored plane trees
		with $n-1$ edges.
	\end{theorem}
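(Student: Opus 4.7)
The plan is to construct an explicit bijection between equivalence classes of conservative polynomials of degree $n$ and equivalence classes of bicolored plane trees with $n-1$ edges, in a manner analogous to the Shabat correspondence of Theorem~\ref{THM:shabat_tree} but replacing the preimage of a segment by a dynamical invariant of the polynomial. The key quantitative identity driving the correspondence is the following: if $C$ is conservative of degree $n$ with distinct critical points $\zeta_{1},\dots,\zeta_{p}$ of local degrees $k_{1},\dots,k_{p}$, then $\sum_{i=1}^{p}(k_{i}-1)=n-1$, and moreover each $\zeta_{i}$ is a super-attracting fixed point of $C$. This matches the valency identities \eqref{pq1}--\eqref{pq2} for a bicolored plane tree with $n-1$ edges in which the black vertices correspond to the $\zeta_{i}$ with valencies $k_{i}-1$ (so the black valencies sum to $n-1$), while the white vertices encode complementary data, such as the non-critical fixed points, together with the planar adjacency inherited from the dynamics of $C$.

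For the forward map $C\mapsto \lambda_{C}$, I would build $\lambda_{C}$ by taking black vertices at the critical points $\zeta_{i}$ with the prescribed valencies $k_{i}-1$, placing white vertices at the remaining fixed points, and drawing edges along arcs of the filled Julia set of $C$ (equivalently, through the combinatorial structure of the preimage of an appropriate topological arc passing through all critical values). The planar embedding is inherited from that of $\C$, together with a canonical orientation convention. For the inverse map, given a bicolored plane tree $\lambda$ with $n-1$ edges, the black-vertex valencies prescribe the multiplicities of the critical points of the desired conservative polynomial $C$; the remaining combinatorics of $\lambda$ single out one of the finitely many polynomials satisfying the algebraic system $C'(x)=n\prod_{i=1}^{p}(x-\zeta_{i})^{k_{i}-1}$ together with the conservative conditions $C(\zeta_{i})=\zeta_{i}$, up to the equivalence $C\sim \mu^{-1}\circ C\circ \mu$.

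The main obstacle will be the rigidity/uniqueness assertion: that the planar combinatorial datum $\lambda_{C}$ is a complete invariant of the conservative polynomial $C$ up to equivalence. Unlike the Shabat case, where the tree literally equals $P^{-1}([0,1])$ and rigidity follows from the classical dessin d'enfant correspondence, here $\lambda_{C}$ encodes only a dynamical skeleton of $C$. Establishing rigidity requires a Thurston-type topological argument adapted to post-critically finite branched covers $\C\to \C$ whose entire post-critical set consists of fixed points. Once rigidity is in hand, the existence part --- that every bicolored plane tree with $n-1$ edges is realized as $\lambda_{C}$ for some conservative $C$ --- follows by a combinatorial induction analogous to the proof of Lemma~\ref{lemder2}, supplemented by a dimension count on the above algebraic system to confirm that each combinatorial choice determines a unique equivalence class.
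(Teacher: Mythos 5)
First, a framing point: the paper does not prove Theorem~\ref{THM:tish} at all --- it is Tischler's theorem, quoted with the citation \cite[Theorem 4.2]{tishl}, and the surrounding text merely recalls his construction of the tree $\lambda_C$. Measured against that construction, your forward map is essentially the right one (up to an immaterial swap of colors: in the paper's convention the critical points are the \emph{white} vertices, which is the convention that system \eqref{sys2} and Corollary~\ref{COR:conservative_with_certain_mult} later rely on), but as a proof your text has genuine gaps. You never establish that $\lambda_C$ is a bicolored plane tree: this needs the B\"ottcher conjugacy on each immediate basin to extend continuously to the closed disk, the $k_i-1$ fixed internal rays to land at repelling fixed points, the union of the resulting stars to be connected and acyclic, and a fixed-point count (each critical fixed point is automatically a simple root of $C(x)-x$ because $k_i\ge 2$; that the remaining $n-p$ fixed points are simple and repelling requires excluding parabolic, attracting and rotation-domain fixed points, which one does using that all critical orbits are fixed). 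None of this appears in your text. Moreover, your parenthetical ``equivalently, through the preimage of a topological arc passing through all critical values'' is wrong: for a conservative $C$ the critical values are the $p$ distinct points $\zeta_1,\dots,\zeta_p$ themselves, and by Riemann--Hurwitz the preimage of an arc through them is a tree with $n(p-1)$ edges --- a dessin-type object in the spirit of Theorem~\ref{THM:shabat_tree}, not the forward-invariant $(n-1)$-edge tree, and not an invariant of $C$ under the stated equivalence.

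Second, the two load-bearing steps of the bijection are named but not supplied. Injectivity is exactly the rigidity you defer to ``a Thurston-type topological argument'': one must show that a critically fixed branched cover carries no Thurston obstruction and that the invariant tree is a complete invariant up to affine conjugacy; this is the hard core of the theorem, not a routine adaptation. Your surjectivity argument would also fail as stated: Lemma~\ref{lemder2} manufactures \emph{trees} from valency data, not polynomials from trees, and a ``dimension count'' on system \eqref{sys2} cannot single out realizations --- \eqref{sys2} involves only the white valencies, so the (in general several, mutually inequivalent) trees sharing the same white valency data all lead to the \emph{same} finite algebraic system, and matching its solutions bijectively with those trees is precisely the content of Theorem~\ref{THM:tish}; invoking the count here is circular. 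A correct completion realizes a given tree by an explicitly constructed critically fixed topological polynomial and then applies Thurston's theorem, or follows Tischler's original argument in \cite{tishl}; as it stands, your proposal is a sound outline of that strategy rather than a proof.
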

	
	For  a conservative polynomial $C$, the corresponding plane tree
	$\lambda_C$ is constructed as follows (see \cite{tishl}
	for more detail and \cite{pakovich} for some pictures).
	Let $\zeta$ be a critical point of $C(x)$ and $d\geq 2$ the local multiplicity
	of $C(x)$ at $\zeta.$ Then one can show that the immediate attractive
	basin $B_{\zeta}$ of $\zeta$ is a disk and that
	there is an analytic conjugation of $C(x)$ on $B_{\zeta}$
	to $x\rightarrow x^{d}$ on the unit disk $D$ such that the conjugating map $\phi_{\zeta} : \mathbb D \rightarrow
	B_{\zeta}$ extends continuously to the closed unit
	disk $\overline {\mathbb D}$. Let $U$ be a union of $d-1$ radial
	segments which are forward invariant under the map $x\rightarrow x^{d}$
	on $\overline {\mathbb D}$, and $U_{\zeta}$ the image of $U$
	under the map $\phi_{\zeta}$, considered as a bicolored graph
	with a unique white vertex, which is the image of zero, and $d-1$
	black vertices, which are the images of end-points of $U$.
	In this notation, $\lambda_C$ is defined as a union
	$\lambda_C=\cup_{i=1}^p U_{\zeta_i},$ where $\zeta_i,$ $1\leq i \leq p,$ are all
	finite critical points of $C(x).$
	Note that by construction $\lambda_C$ is a forward
	invariant of $C(x)$, and white (resp. black) vertices of $\lambda_C$
	are attractive (resp. repelling) fixed points of $C(x).$
	
	In the other direction, if $\lambda$ is a bicolored plane tree with $n-1$
	edges and the sequence of valencies
	of white vertices $\alpha_1,\ldots, \alpha_p$, then a corresponding
	conservative polynomial $C(x)$ satisfies the system
	\begin{equation} \label{sys2}
		\begin{cases}
			C'(x) = c(x - c_1)^{\alpha_1}\ldots(x - c_p)^{\alpha_p}\\
			C(c_i) = c_i, 
		\end{cases}
	\end{equation}
	where $c_1,\ldots,c_p \in \C$  are pairwise distinct and  $c\in \C$ is distinct from zero. 
	
	Notice that in distinction with system \eqref{sys} 
	the valencies of black vertices do not appear in system \eqref{sys2}. In addition, 
	the number of edges of a tree corresponding to a conservative polynomial
	of the degree $n$ is $n-1$ instead of $n$. Nevertheless, 
	similar to system \eqref{sys}, system \eqref{sys2} reduces to
	a system of equations in $c_1,\ldots, c_p, c$
	with rational coefficients. Furthermore,  
	if $C(x)$ is normalized, then 
	the number of solutions of \eqref{sys2} is finite and these solutions are
	algebraic numbers. Thus, for any plane tree the corresponding equivalence class of 
	conservative polynomials contains polynomials with algebraic coefficients.

	A counterpart of
	Corollary \ref{COR:shabat_with_certain_mult}, which follows from 
	Theorem \ref{THM:tish} is the following statement.

	\begin{cor} \label{COR:conservative_with_certain_mult}
		Let $l\geq 1$ and $\gamma_1,\ldots, \gamma_{l}\ge 1$ be  integers,
		and $n = \sum\limits_{i=1}^{l}\gamma_i.$
		Then there exist pairwise distinct
		$c_1,\ldots,c_l\in \overline{\mathbb Q} \subseteq \mathbb K$
		and a conservative polynomial
		$C(x)\in\overline{\mathbb Q}[x] \subseteq \mathbb K[x]$
		of degree $n+1$  such that
		\begin{equation} \label{sa}
			C'(x)=(x - c_1)^{\gamma_1}\ldots(x - c_l)^{\gamma_l} 
		\end{equation}
		and 
		\begin{equation} \label{as}
			C(c_i)=c_i, \ \ \ \ 1\leq i \leq l.
		\end{equation}
	\end{cor}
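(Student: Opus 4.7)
The plan is to mirror the proof of Corollary~\ref{COR:shabat_with_certain_mult}, replacing Shabat polynomials and Lemma~\ref{lemder3} with conservative polynomials (via Theorem~\ref{THM:tish}) and the easier existence result Lemma~\ref{lemder2}. A convenient feature is that the statement makes no claim about black-vertex valencies, so we have complete freedom in choosing them when building the underlying tree.

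First I would produce a bicolored plane tree $\lambda$ with $n$ edges whose white valency sequence is $(\gamma_1,\ldots,\gamma_l)$. By Lemma~\ref{lemder2}, it suffices to specify a partition of $n$ into $q=n-l+1$ positive parts for the black valency sequence, since then $p+q=l+(n-l+1)=n+1$ as required. Because $l\ge 1$ and $n\ge l$, the explicit choice $(l,\underbrace{1,\ldots,1}_{n-l})$ works (with the convention that for $l=n$ the list is just $(n)$), so such a $\lambda$ exists.

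Next, applying Theorem~\ref{THM:tish} to $\lambda$, I obtain an equivalence class of conservative polynomials of degree $n+1$, and within it the normalized representative gives a polynomial $\widetilde C(x)\in \overline{\mathbb Q}[x]$ satisfying system~\eqref{sys2} with $(\alpha_1,\ldots,\alpha_p)=(\gamma_1,\ldots,\gamma_l)$. Since $\widetilde C$ is monic of degree $n+1$, its derivative has leading coefficient $n+1$, so
\[
\widetilde C'(x)=(n+1)(x-\widetilde c_1)^{\gamma_1}\cdots(x-\widetilde c_l)^{\gamma_l}, \qquad \widetilde C(\widetilde c_i)=\widetilde c_i,
\]
with pairwise distinct $\widetilde c_i\in \overline{\mathbb Q}$ (being roots of $\widetilde C'\in \overline{\mathbb Q}[x]$).

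Finally, I rescale by an affine conjugation to normalize the leading coefficient of $C'$ to one while preserving conservativity. Fix $\alpha\in \overline{\mathbb Q}$ with $\alpha^n=1/(n+1)$ and set $C(x)=\alpha^{-1}\widetilde C(\alpha x)$; this is the conjugation $\mu^{-1}\circ \widetilde C\circ \mu$ with $\mu(x)=\alpha x$, so $C\in \overline{\mathbb Q}[x]$ and $C$ is again conservative (affine conjugation preserves the conservative property by the chain rule, which is why it is adopted as the equivalence relation in the first place). A direct computation yields
\[
C'(x)=\widetilde C'(\alpha x)=(n+1)\alpha^n\prod_{i=1}^l\bigl(x-\widetilde c_i/\alpha\bigr)^{\gamma_i}=\prod_{i=1}^l(x-c_i)^{\gamma_i}
\]
with $c_i:=\widetilde c_i/\alpha\in \overline{\mathbb Q}$, proving \eqref{sa}, while $C(c_i)=\alpha^{-1}\widetilde C(\widetilde c_i)=\alpha^{-1}\widetilde c_i=c_i$ proves \eqref{as}. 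There is no genuine obstacle in the argument; the only detail to keep track of is that each rescaling step stays inside $\overline{\mathbb Q}$, which is guaranteed by choosing $\alpha$ as an algebraic $n$-th root of $1/(n+1)$.
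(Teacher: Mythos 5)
Your proof is correct and follows essentially the same route as the paper's: the identical tree from Lemma~\ref{lemder2} with black valency partition $(l,\underbrace{1,\ldots,1}_{n-l})$, Theorem~\ref{THM:tish} to obtain a conservative polynomial of degree $n+1$ with algebraic coefficients, and a linear conjugation $\mu(x)=\alpha x$ to normalize the leading coefficient of the derivative while preserving conservativity. If anything, your scaling condition $\alpha^{n}=1/(n+1)$ (justified by taking the monic normalized representative, whose derivative has leading coefficient $n+1$) is more careful than the paper's stated choice ``$\varepsilon^{n-1}=1$,'' which appears to be a typo for the condition $c\,\varepsilon^{n}=1$ that the computation actually requires.
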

	\begin{proof}
		Let $(\delta_1,\ldots,\delta_{n+1-l})$ be an arbitrary partition of
		the number $n$ containing $n+1-l$ elements. For example,
		we can take
		\[
		(\delta_1,\ldots,\delta_{n+1-l})=(l,\underbrace{1, \ldots, 1}_{n - l}).
		\]
		By Lemma \ref{lemder2}, there exists  a bicolored plane tree $\lambda$
		with $n$ edges and the sequences of white and black valencies
		$(\gamma_1,\ldots,\gamma_{l})$ and $(\delta_1,\ldots, \delta_{n+1-l}).$
		Therefore, by Theorem \ref{THM:tish}, there exist
		pairwise distinct $\widetilde c_1,\ldots,\widetilde c_l \in \overline{\mathbb Q}$ and a conservative polynomial
		$\widetilde C(x)\in\overline{\mathbb Q}[x]$ of degree $n+1$ such that
		\[
		\widetilde C'(x) = c(x - \widetilde c_1)^{\gamma_1}\ldots(x - \widetilde c_l)^{\gamma_l}\\
		\]
		for some $0\ne c\in \overline{\mathbb Q}.$  Setting now $\mu=\varepsilon x$, where 
		$\varepsilon$ satisfies $\varepsilon^{n-1}=1,$
		we see that the conservative polynomial $C=\mu^{-1}\circ \widetilde C\circ \mu$ has algebraic coefficients and 
		satisfies~\eqref{sa} and \eqref{as} for $c_i=\widetilde c_i/\varepsilon $, $1\leq i \leq l.$  
	\end{proof}
	

	\subsection{Proof of Theorem \ref{THM:main}}

	Since the condition 
	$S=\emptyset$ provides no restrictions, the first part of the theorem is trivially true.  The second part is also true since for any
	polynomial $F(x)\in {\mathbb K}[x]$ such that
	
	\[
	F'(x) = f(x) =
	(x - a)^{\alpha_1}(x - b_1)^{\beta_1}\ldots(x - b_k)^{\beta_k},
	\]
	the polynomial
	$F(x) - F(a)$ obviously is an $S$-full integral of $f(x)$ for $S=\{a\}.$ 
	
	To prove the fourth part, we observe that  if  $F(x)$ is
	an $S$-full integral of $f(x),$ then 
	\[
	(x - a_i)^{\alpha_i + 1}\,|\,F(x), \ \ \ i = 1,\ldots, m.
	\]
	Therefore, 
	\[
	\deg F(x) \geq M + m,  
	\]
	implying that 
	$$
	m \leq  \deg F(x)- M =n+1-M.
	$$
	
	Let us prove now the  third part. Notice that the condition 
	\begin{equation} \label{uslovie}
		2\leq m\leq n-M+1
	\end{equation}
	implies that  $k>0$, for otherwise $n=M$ and \eqref{uslovie} leads to a contradictory inequality $2\leq m \leq 1.$ 
	Since $k>0$ and $m\geq 2$, it follows from  
	Corollary \ref{COR:shabat_with_certain_mult} that 
	there exist pairwise distinct $a_1,\ldots, a_m,$ $b_1,\ldots, b_k \in
	\overline{\mathbb Q} \subseteq\mathbb K$
	and a Shabat polynomial
	$P(z)\in\overline{\mathbb Q}[z] \subseteq \mathbb K[z]$ 
	of degree $n+1$  such that the equalities \eqref{con1} and \eqref{con2} hold. Thus, $P(z)$ is an $S$-full
	integral of $P'(z)$ for $S=\{a_1,\ldots, a_m\}$, and hence 
	the first statement of the third part is true.
	
	Further, since $k>0$ and $m\geq 2$ imply that $k+m\geq 1$, we can apply  Corollary \ref{COR:conservative_with_certain_mult} for
	\[
	(\gamma_1,\ldots,\gamma_{l})=
	(\alpha_1,\ldots, \alpha_{m},\beta_1,\ldots, \beta_{k}),
	\]
	and find  pairwise distinct
	$c_1, \dots ,c_l\in\overline{\mathbb Q} \subseteq \mathbb K$
	and a conservative polynomial
	$C(z)\in \overline{\mathbb Q}[z] \subseteq \mathbb K[z]$
	of degree $n+1$ such that
	\[
	C'(z)= (x - c_1)^{\alpha_1}\ldots(x - c_m)^{\alpha_m}
	(x - c_{m+1})^{\beta_1}\ldots(x - c_{m+k})^{\beta_{k}}
	\]
	and
	\begin{equation} \label{ci}
		C(c_i) = c_i, \ \ \ 1\leq i \leq m+k.
	\end{equation}
	Since any primitive $F(z)$ of $C'(z)$ has the form 
	$F(z) = C(z) + c,$ $c\in \mathbb K,$ it follows from \eqref{ci} 
	that
	$C'(z)$
	does not have an
	$S$-full integral for any subset $S$ of $\{c_1,\ldots,c_{m+k}\}$ that contains at least two elements. 
	Thus, to prove the second statement of the third part of the theorem, we can set for example 
	$$
	a_i=c_i, \ \ \ 1\leq i \leq m, \ \ \ \ b_i=c_{i+m},\ \ \ 1\leq i \leq k.
	$$
	\qed
	
	\begin{corollary}
		For all values of multiplicities $\alpha_1,\ldots, \alpha_{m}\geq 2,$ $\beta_1,\ldots, \beta_{k}\ge 1$  satisfying  $$2 \leq m \leq n - M + 1$$
		there exist infinitely many polynomials that have $S$-full integrals and infinitely many polynomials that do not have $S$-full integrals. 
	\end{corollary}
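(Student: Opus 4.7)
The plan is to derive infinitely many examples from the single examples already produced by Theorem \ref{THM:main}(3) via affine changes of variable. First I would invoke the third part of Theorem \ref{THM:main} in order to obtain one polynomial $f_1$ with the prescribed multiplicity pattern that admits an $S$-full integral and one polynomial $f_2$ of the same pattern that does not, both having coefficients in $\overline{\mathbb Q}\subseteq \mathbb K$.

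Next, for any $c\in\mathbb K$, I would consider the translate $g_c(x):=f(x-c)$. This polynomial has the same degree, the same number of distinct roots, and the same multiplicity pattern as $f$, with multiple roots $S_c=\{a_1+c,\ldots,a_m+c\}$. The key observation is that the map $F(x)\mapsto F(x-c)$ is a bijection between the $S$-full integrals of $f$ and the $S_c$-full integrals of $g_c$: indeed, if $F'=f$ and $F(a_i)=0$, then setting $G(x):=F(x-c)$ one has $G'(x)=F'(x-c)=f(x-c)=g_c(x)$ and $G(a_i+c)=F(a_i)=0$, and the inverse map $G(x)\mapsto G(x+c)$ performs the reverse passage. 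Consequently, $g_c$ is $S_c$-full integrable if and only if $f$ is $S$-full integrable.

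Finally, I would vary $c$ over $\mathbb K$ and apply the above observation to $f_1$ and $f_2$ separately. Since $\mathbb Q\subseteq \mathbb K$, the parameter $c$ ranges over an infinite set, and distinct values of $c$ shift all roots by different amounts, producing polynomials with pairwise distinct root multisets and hence pairwise distinct as polynomials. This yields the two required infinite families, one of $S$-full integrable polynomials and one of non $S$-full integrable polynomials, all with the prescribed multiplicity pattern.

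No serious obstacle is expected, as the essential existence content lies in Theorem \ref{THM:main}(3); the remaining step is only to verify stability of $S$-full integrability under translation, which is immediate. If one prefers examples that are not merely translates of one another, a mild strengthening of the argument, using the composition $f(x)\mapsto \lambda\,f(\lambda x+c)$ and varying the pair $(\lambda,c)\in\mathbb K^\times\times\mathbb K$, produces a two-parameter family while preserving all relevant properties.
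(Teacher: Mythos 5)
Your proposal is correct and takes essentially the same route as the paper: the paper's proof likewise notes that $S$-full integrability is invariant under an affine substitution $\mu(z)=az+b$ (sending $S$ to $\mu^{-1}(S)$) and then invokes part 3 of Theorem~\ref{THM:main} to seed both infinite families. Your translation-only version, together with the explicit check that distinct shifts give distinct root multisets, is just a special case of that argument, and your closing two-parameter variant $(\lambda,c)$ matches the paper's general linear maps.
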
 
	
	\begin{proof}
		If $P$ is a  polynomial,  $S=\{a_1,\ldots, a_m\}$  is a set, and $\mu=az+b$ is a non-constant linear map, then $P$  
		has an $S$-full integral if and only if the polynomial $P\circ \mu$ has  an $\widetilde S$-full integral for the set $\widetilde S=\{\mu^{-1}(a_1),\ldots, \mu^{-1}(a_m)\}$. Then Theorem \ref{THM:main} implies the result.    
	\end{proof}
	
	\section{$S$-full integrals and  matrix integrability}

	In this section, we prove Theorem \ref{Cor:THM:general_integrable_iff_fullintegral} and Theorem \ref{THM:matrix_int_classification}. Notice that our proof of Theorem \ref{Cor:THM:general_integrable_iff_fullintegral} is effective: it  
	shows how a matrix integral is constructed
	from an $S$-full integral of the
	characteristic polynomial. In this relation, we remark that some integrable matrices may have infinitely many integrals. For more detail, we refer the reader to~\cite{IntegratorsOfMatricies}.
	
	Let  $E_{ij}\in M_k$ be $(i,j)$-th matrix unit, i.e., the matrix with $1$ on the $(i,j)$-th position and 0 elsewhere,  $J_k=E_{12}+E_{23}+\ldots +E_{k-1k}\in M_k$   the Jordan matrix of the size~$k$ if $k\ge 2$, and $J_1=0\in M_1$. We denote by  $X^{\top}$   the transposed matrix and by $p_X(x)=\det (xI-X)$   the characteristic polynomial of~$X\in M_n$. Finally, we denote by   $\diag(X_1,\ldots, X_n)$   the block-diagonal matrix with the blocks $X_1, \ldots , X_n$. 
	
	\subsection{Proofs of Theorems \ref{Cor:THM:general_integrable_iff_fullintegral} and \ref{THM:matrix_int_classification}}
	The following lemma is proved in \cite[Lemma 7]{IntegratorsOfMatricies}.
	
	\begin{lemma} 
		If $B \in M_n$ has an integral $A$, then  $\left( \begin{smallmatrix} X & 0^{\intercal}\\ 0 & 1 \end{smallmatrix} \right) A \left( \begin{smallmatrix} X^{-1} & 0^{\intercal}\\ 0 & 1 \end{smallmatrix} \right) $ is an integral of $XBX^{-1}$ for any invertible $X\in M_n$.
	\end{lemma}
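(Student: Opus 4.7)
The plan is to verify the lemma by direct block-matrix computation, exploiting the fact that characteristic polynomials are invariant under similarity. First I would fix the notation: write the given integral in the block form
\[
A=\begin{pmatrix} B & u^{\top}\\ v & b\end{pmatrix},\qquad Y=\begin{pmatrix} X & 0^{\top}\\ 0 & 1\end{pmatrix},\qquad Y^{-1}=\begin{pmatrix} X^{-1} & 0^{\top}\\ 0 & 1\end{pmatrix},
\]
and then carry out the two block multiplications to obtain the explicit form
\[
YAY^{-1}=\begin{pmatrix} XBX^{-1} & (Xu^{\top})\\ vX^{-1} & b\end{pmatrix}.
\]
This computation immediately shows that $YAY^{-1}$ has the required block shape of a candidate integral of $XBX^{-1}$: the top-left $n\times n$ block is $XBX^{-1}$, the bottom-right entry is the scalar $b\in\mathbb K$, and the off-diagonal blocks are a column vector $(Xu^{\top})\in\mathbb K^n$ and a row vector $vX^{-1}\in\mathbb K^n$.

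Next I would verify the characteristic-polynomial condition. Since $YAY^{-1}$ is similar to $A$ and $XBX^{-1}$ is similar to $B$, similarity invariance of the characteristic polynomial gives
\[
p_{YAY^{-1}}(x)=p_A(x)\qquad\text{and}\qquad p_{XBX^{-1}}(x)=p_B(x).
\]
Differentiating the first identity with respect to $x$ and combining with the hypothesis $p_B(x)=\tfrac{1}{n+1}p_A'(x)$ then yields
\[
\tfrac{1}{n+1}p_{YAY^{-1}}'(x)=\tfrac{1}{n+1}p_A'(x)=p_B(x)=p_{XBX^{-1}}(x),
\]
which is precisely the defining identity for $YAY^{-1}$ to be an integral of $XBX^{-1}$.

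There is no substantive obstacle here; the statement is a routine compatibility check between the integrability relation (a condition on characteristic polynomials) and conjugation by block-diagonal matrices of the indicated form. The only small point to be careful with is that one really must use the specific block structure of $Y$ (identity in the last row/column) so that the bottom-right scalar entry $b$ is preserved and the top-left block transforms exactly by $X\cdot X^{-1}$; a general similarity of $A$ would destroy the block form needed by the definition of an integral.
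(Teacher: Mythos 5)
Your proof is correct: the block computation $YAY^{-1}=\left(\begin{smallmatrix} XBX^{-1} & Xu^{\top}\\ vX^{-1} & b\end{smallmatrix}\right)$ shows the conjugate has the required integral-extension shape (note $Xu^{\top}=(uX^{\top})^{\top}$ with $uX^{\top}\in\K^n$, so the off-diagonal blocks are of the form demanded by the definition), and similarity invariance of characteristic polynomials yields $\tfrac{1}{n+1}p'_{YAY^{-1}}(x)=\tfrac{1}{n+1}p'_{A}(x)=p_{B}(x)=p_{XBX^{-1}}(x)$, which is exactly the defining identity. The paper gives no proof of its own here --- it cites \cite[Lemma 7]{IntegratorsOfMatricies} --- and your direct computation is precisely the standard argument behind that citation, including the essential observation you make at the end that the special block-diagonal form of the conjugating matrix (with $1$ in the last diagonal entry) is what preserves the integral-extension structure.
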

	
	The following corollary is straightforward:
	\begin{cor} \label{CorSimInt}
		1. If  $B \in M_n$ is integrable, then  for any invertible $X\in M_n$ the matrix $XBX^{-1}$ is integrable.  
		
		2. If $B $ has an integral $A$ and the matrix $X$ commutes with $B$, then $\left( \begin{smallmatrix} X & 0^{\intercal}\\ 0 & 1 \end{smallmatrix} \right) A \left( \begin{smallmatrix} X^{-1} & 0^{\intercal}\\ 0 & 1 \end{smallmatrix} \right) $ is also an integral of~$B$.
	\end{cor}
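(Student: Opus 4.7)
The plan is to derive both parts directly from the preceding lemma, which asserts that conjugating an integral $A$ of $B$ by the block matrix $\bigl(\begin{smallmatrix} X & 0^{\top}\\ 0 & 1 \end{smallmatrix}\bigr)$ yields an integral of $XBX^{-1}$. Both parts of the corollary are simply two readings of this conclusion under different hypotheses, so no additional computation is needed beyond invoking the lemma and unpacking definitions.

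For part~1, I would fix an integrable $B \in M_n$, choose any integral $A \in M_{n+1}$ of $B$ (which exists by the definition of integrability), and then, for an arbitrary invertible $X \in M_n$, appeal to the lemma to produce the explicit $(n+1)\times(n+1)$ matrix $\bigl(\begin{smallmatrix} X & 0^{\top}\\ 0 & 1 \end{smallmatrix}\bigr) A \bigl(\begin{smallmatrix} X^{-1} & 0^{\top}\\ 0 & 1 \end{smallmatrix}\bigr)$ as an integral of $XBX^{-1}$. Exhibiting this one integral is exactly what is required to conclude that $XBX^{-1}$ is integrable.

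For part~2, the key observation is that the commutation hypothesis $XB = BX$ is equivalent to $XBX^{-1} = B$. Substituting this equality into the conclusion of the lemma turns the statement ``integral of $XBX^{-1}$'' into ``integral of $B$'' verbatim, which is precisely the desired conclusion.

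There is no real obstacle: the lemma cited from \cite{IntegratorsOfMatricies} carries the entire content, and the proof amounts to two one-line deductions. The only point worth mentioning in passing is that, unlike similarity of $B$ with $XBX^{-1}$ in part~1, part~2 produces genuinely different integrals of the same matrix $B$ as $X$ ranges over the centralizer of $B$, which underlies the remark in \cite{IntegratorsOfMatricies} that some integrable matrices admit infinitely many integrals.
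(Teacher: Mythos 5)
Your proposal is correct and matches the paper's intent exactly: the paper itself gives no written proof, declaring the corollary ``straightforward'' from the cited lemma, and your two one-line deductions (apply the lemma directly for part~1; use invertibility of $X$ together with $XB=BX$ to rewrite $XBX^{-1}=B$ for part~2) are precisely the intended argument.
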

	
	\begin{lemma} \label{LM:jordan_basic_for_integral} 
		For any non-zero vector $v \in\K^n$, let $k$ be the smallest positive integer such that $v_k\ne 0$, i.e., $v= (\underbrace{0,\ldots, 0}_{k-1}, v_k, v_{k+1}, \ldots, v_n)$, where $1\le k\le n,$ $v_k\ne 0$.
		Then there exists a polynomial $h(x) \in\K[x]$ of the degree at most $(n - k)$
		such that $h(J_n)$ is invertible and
		$v\cdot h(J_n) =
		(\underbrace{0,\ldots, 0}_{k-1}, 1, \underbrace{0,\ldots, 0}_{n - k}).$
	\end{lemma}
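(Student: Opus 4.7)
The plan is to exploit the fact that $J_n$ acts on row vectors by a right shift, so that polynomials in $J_n$ produce very structured transformations which can be inverted by solving a triangular system. Concretely, I would first verify directly from $J_n=\sum_{i=1}^{n-1}E_{i,i+1}$ that for any row vector $w=(w_1,\dots,w_n)$ one has $wJ_n=(0,w_1,\dots,w_{n-1})$, and more generally $wJ_n^j=(\underbrace{0,\dots,0}_{j},w_1,\dots,w_{n-j})$. Applied to our vector $v$ whose first non-zero entry sits in position $k$, this says that $vJ_n^j$ has all entries in positions $1,\dots,k+j-1$ equal to zero, and entry $v_k$ in position $k+j$.

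Next, write a candidate polynomial in the form $h(x)=a_0+a_1x+\dots+a_{n-k}x^{n-k}$, so that $h$ has exactly $n-k+1$ unknown coefficients. Expanding $v\cdot h(J_n)=\sum_{j=0}^{n-k}a_j\,vJ_n^j$ using the shift formula above, I would observe that the $(k+r)$-th coordinate of $v\cdot h(J_n)$ (for $0\le r\le n-k$) equals
\[
a_0 v_{k+r}+a_1 v_{k+r-1}+\dots+a_r v_k,
\]
while all coordinates to the left of position $k$ vanish automatically. Requiring $v\cdot h(J_n)$ to equal the standard basis row vector with $1$ in position $k$ thus becomes a system of $n-k+1$ linear equations in the $n-k+1$ unknowns $a_0,\dots,a_{n-k}$, whose matrix is lower triangular with $v_k$ on the diagonal. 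Since $v_k\ne 0$, this system has a unique solution, which determines $h(x)$ of degree at most $n-k$.

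Finally, I would check invertibility of $h(J_n)$. Because $J_n$ is nilpotent, $h(J_n)$ is invertible if and only if its constant term $h(0)=a_0$ is non-zero. The very first equation of the triangular system reads $a_0 v_k=1$, hence $a_0=1/v_k\ne 0$, which gives invertibility at once. The only place where one has to be a little careful is matching the degree bound $\deg h\le n-k$ with the number of non-trivial coordinates of $v\cdot h(J_n)$ (namely $n-k+1$, from position $k$ to position $n$); once this bookkeeping is done, the argument is essentially forced by the shift action of $J_n$, and there is no substantive obstacle beyond setting up the triangular system correctly.
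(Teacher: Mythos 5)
Your proof is correct and is essentially the paper's own argument: both reduce the condition $v\cdot h(J_n)=(0,\dots,0,1,0,\dots,0)$ to the same triangular linear system in the $n-k+1$ coefficients of $h$ with $v_k\neq 0$ on the diagonal, and both get invertibility of $h(J_n)$ from $h(0)=1/v_k\neq 0$. The only difference is presentational: you derive the system from the right-shift action of $J_n$ on row vectors, while the paper reads it off from the explicit Toeplitz form of $h(J_n)$ with entries $h^{(j)}(0)/j!$ — the two systems coincide under $a_j=h^{(j)}(0)/j!$.
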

	\begin{proof}
		It is well-known that
		
		\[
		h(J_n) = \begin{pmatrix}
			h(0) & \frac{h'(0)}{1!} &\frac{h''(0)}{2!} &\frac{h'''(0)}{3!}
			& \ldots & \frac{h^{(n-2)}(0)}{(n-2)!}& \frac{h^{(n-1)}(0)}{(n-1)!}\\
			0&h(0) & \frac{h'(0)}{1!}&\frac{h''(0)}{2!} & \ldots &
			\frac{h^{(n-3)}(0)}{(n-3)!}&\frac{h^{(n-2)}(0)}{(n-2)!}\\
			0&0&h(0) & \frac{h'(0)}{1!} & \ldots & \frac{h^{(n-4)}(0)}{(n-4)!}
			& \frac{h^{(n-3)}(0)}{(n-3)!}\\
			\cdots&\cdots&\ddots&\ddots& 
			&\cdots&\cdots\phantom{\vdots}
			\\
			\cdots&\cdots& &\ddots&\ddots&\cdots&\cdots\phantom{\vdots}\\ 
			0&0& \ldots   & \ldots& 0 &h(0) & \frac{h'(0)}{1!}\\
			0 & 0&  \ldots & \ldots &0& 0 & h(0)
		\end{pmatrix}
		\]
		and a direct computation shows that  
		
		\[
		v \cdot h(J_n) = \left(\underbrace{0,\ldots, 0}_{k-1},\
		v_k h(0),\ v_k\frac{h'(0)}{1!} + v_{k+1}h(0),\
		\ldots,\ \sum\limits_{i = k}^{n}v_i
		\frac{h^{(n - i)}(0)}{(n - i)!} \right).
		\]
		Thus, the equality
		$v\cdot h(J_n) =
		(\underbrace{0,\ldots, 0}_{k-1}, 1,
		\underbrace{0,\ldots, 0}_{n - k})$
		is equivalent   to the  system
		
		\begin{equation} \label{eq:ys}
			\begin{cases}
				h(0) = \frac{1}{v_k}\\
				h'(0) = -\frac{1!}{v_k}v_{k+1}h(0)\\
				h''(0) = -\frac{2!}{v_k}
				\left(v_{k+2}h(0) + v_{k+1}\frac{h'(0)}{1!}\right)\\
				\vdots\\
				h^{n - k}(0) = -\frac{(n - k)!}{v_k}
				\sum\limits_{i = k+1}^{n}v_i\frac{h^{(n - i)}(0)}{(n - i)!}
			\end{cases}
		\end{equation}
		Writing   $h(x)$ as
		$h(x) = c_0 + c_1 x + \ldots + c_{n-k}x^{n-k}$ with   unknown coefficients $c_0, \ldots, c_{n-k}$,  we see that
		\eqref{eq:ys} transforms into the triangular linear system for
		$c_0,\ldots,c_{n-k}$, 
		which is solvable since the determinant of this system is equal to
		$\prod_{l = 0}^{n - k} l!\ne 0$.
		Moreover $\det(h(J_n)) = (h(0))^n = \frac{1}{v_k^n} \neq 0$
		and hence $h(J_n)$ is invertible as desired.
	\end{proof}
	
	\begin{defin} 
		Let $B \in M_n$. Any matrix $A\in M_{n+1}$ of the form
		$A = \left(
		\begin{smallmatrix}
			B & u^{\intercal}\\
			v & b
		\end{smallmatrix}
		\right)$
		is called an {\it integral extension} of the matrix $B$.
	\end{defin}
	Notice that an integral extension is not necessarily an integral of the matrix $B$, but any integral of $B$ is an integral extension by the definition. Note also that if a matrix $A$ is an integral of $B$ then necessarily $b = \frac{tr(B)}{n}$. Indeed, $b=tr(A)-tr(B)$. On the other hand,   $p_A'(x) = (n+1)p_B(x)$ implies   $tr(A)\cdot n=  (n+1)\, tr(B)$.
	
	By Corollary~\ref{CorSimInt}(1), 
	if a matrix $B$ is integrable, then all matrices similar to $B$ are integrable as well. Thus,  we  may  assume that
	the matrix $B \in M_n$ is in the Jordan normal form. Namely, 
	
	\begin{equation} \label{eq:B}
		B=\diag(B_1,\ldots,B_m),
	\end{equation}
	where $B_i $ is the   Jordan block for $b_i,$
	i.e. the union of all Jordan cells of $B$ with the eigenvalue $b_i$ ordered in non-increasing order of block sizes.
	We denote the number of Jordan cells in $B_i$ by $\beta_i$,  and   the
	sizes of the Jordan cells of $B_i$ by
	$k_{i,1} \geq \ldots \geq k_{i,\beta_i}$.
	Then
	
	\begin{equation}
		B_i=\diag(b_iI_{k_{i,1}} + J_{k_{i,1}} 
		, \ b_iI_{k_{i,2}} + J_{k_{i,2}},\ \ldots,\ b_iI_{k_{i,\beta_i}} + J_{k_{i,\beta_i}})  \in M_{\alpha_i}
	\end{equation}
	where
	\begin{equation} \label{EQ:alpha}
		\alpha_i= k_{i,1}+\ldots +k_{i,\beta_i}, \ i = 1,\ldots, m.
	\end{equation}
	Note that in the introduced notations the characteristic polynomial of $B$ is 
	$$
	p_B(x) = (x - b_1)^{\alpha_1}\ldots(x - b_m)^{\alpha_m},
	$$
	where $b_1,\ldots,b_m$ are pairwise distinct.

	Let $A = \left(
	\begin{smallmatrix}
		B & u^{\intercal}\\
		v & b
	\end{smallmatrix}
	\right)\in M_{n+1}$ be an integral extension of $B$. Then in accordance with the introduced notations 
	\begin{equation} \label{eq:A}
		A =
		\left( \begin{smallmatrix}
			b_1I_{k_{1,1}} + J_{k_{1,1}}&&&&&&&(u^{1, 1})^\top\\
			&\ddots&&&&&&\vdots\\
			&&b_1I_{k_{1,\beta_1}} + J_{k_{1,\beta_1}} &&&&&(u^{1, \beta_1})^\top\\
			&&&\ddots &&&& \vdots\\
			&&&&b_mI_{k_{m,1}} + J_{k_{m,1}} &&& (u^{m, 1})^\top\\
			&&&&&\ddots&&\vdots\\
			&&&&&&b_mI_{k_{m,\beta_m}} + J_{k_{m,\beta_m}} & (u^{m, \beta_m})^\top\\
			v^{1, 1}& \ldots& v^{1, \beta_1} & \ldots & v^{m,1} & \ldots
			& v^{m, \beta_m} & b
		\end{smallmatrix} \right) \in M_{n+1},
	\end{equation}
	where $v^{i,j}, u^{i,j}\in \K^{k_{i,j}}.$
	
	\begin{defin} \label{DEF:matrix_jordan_form}
		In the above notations if
		\begin{equation} \label{eq:rij}
			v^{i,j} = (\underbrace{0, \ldots, 0}_{r_{i,j}}, 1, 0, \ldots, 0) 
			\in \K^{k_{i,j}}
		\end{equation}
		for some $r_{i,j}$, $0 \leq r_{i,j} \leq k_{i,j},$
		then the matrix $A$ is called a
		{\it normalized} integral extension of~$B$.
	\end{defin}
	Notice that  in case $r_{i, j} = k_{i, j}$
	we have $v^{i, j} = 0.$

	\begin{lemma} \label{lem:normA}
		Let
		$A = \left(
		\begin{smallmatrix}
			B & u^{\intercal}\\
			v & b
		\end{smallmatrix}
		\right)\in M_{n+1}$ be an integral extension of   $B \in M_n$. Then there exists a normalized integral  extension $\tilde A\in M_{n+1}$ of   $B$  such that the matrices $A$ and $\tilde A$ are similar and the similarity matrix  $C=\diag(C_1,1)$ satisfies $C_1B=BC_1$.
	\end{lemma}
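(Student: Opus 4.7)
The plan is to conjugate $A$ by a matrix of the form $C = \diag(C_1, 1)$ with $C_1$ commuting with $B$; the condition $C_1 B = B C_1$ automatically preserves the top-left block, while conjugation acts on the last row as $v \mapsto v C_1^{-1}$, which I will exploit to normalize $v$ cell-by-cell. Explicitly,
\[
C A C^{-1} = \begin{pmatrix} C_1 B C_1^{-1} & C_1 u^{\intercal} \\ v C_1^{-1} & b \end{pmatrix} = \begin{pmatrix} B & C_1 u^{\intercal} \\ v C_1^{-1} & b \end{pmatrix},
\]
so the task reduces to producing an invertible $C_1$ with $C_1 B = B C_1$ such that $v C_1^{-1}$ satisfies condition \eqref{eq:rij} on each block.

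To build $C_1$, I will take it block-diagonal with respect to the partition into individual Jordan cells of $B$, namely $C_1 = \diag(C^{1,1}, \ldots, C^{1,\beta_1}, \ldots, C^{m,1}, \ldots, C^{m,\beta_m})$ where the $(i,j)$-th block has size $k_{i,j}$. If each diagonal block is chosen to be a polynomial in $J_{k_{i,j}}$, then it commutes with $b_i I_{k_{i,j}} + J_{k_{i,j}}$, and hence the assembled $C_1$ commutes with $B$.

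The choice of blocks is dictated by Lemma \ref{LM:jordan_basic_for_integral}. Decompose $v = (v^{1,1}, \ldots, v^{m,\beta_m})$ accordingly. For each $(i,j)$ with $v^{i,j} = 0$, set $C^{i,j} := I_{k_{i,j}}$ (which gives $r_{i,j} = k_{i,j}$). For each $(i,j)$ with $v^{i,j} \neq 0$, apply Lemma \ref{LM:jordan_basic_for_integral} to obtain $h_{i,j}(x) \in \K[x]$ such that $h_{i,j}(J_{k_{i,j}})$ is invertible and $v^{i,j} \cdot h_{i,j}(J_{k_{i,j}})$ equals the standard basis row vector with its unique $1$ at the position of the first nonzero entry of $v^{i,j}$; then set $(C^{i,j})^{-1} := h_{i,j}(J_{k_{i,j}})$. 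By block multiplication the row $v C_1^{-1}$ splits as the concatenation of the normalized pieces $v^{i,j} h_{i,j}(J_{k_{i,j}})$, each matching \eqref{eq:rij}. Hence $\tilde A := C A C^{-1}$ is a normalized integral extension of $B$, and it is similar to $A$ via the matrix $C = \diag(C_1,1)$ with $C_1 B = B C_1$, as required.

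The proof is essentially a repackaging of Lemma \ref{LM:jordan_basic_for_integral}, and no step is genuinely an obstacle; the only point needing care is to observe that choosing each diagonal block of $C_1$ to be a polynomial in the corresponding $J_{k_{i,j}}$ guarantees simultaneously that (a) each cell of $v$ can be normalized independently and (b) the full $C_1$ commutes with $B$, so that the block $B$ is preserved by the conjugation and the result is again an integral extension.
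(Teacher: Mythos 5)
Your proof is correct and takes essentially the same route as the paper: conjugate by $C=\diag(C_1,1)$ with $C_1$ block-diagonal along the individual Jordan cells, each block obtained from Lemma~\ref{LM:jordan_basic_for_integral} so that $C_1$ commutes with $B$ and the bottom row $vC_1^{-1}$ is normalized cell-by-cell. If anything, you are slightly more careful than the paper's writeup, which sets $C_{i,j}=h_{i,j}(J_{k_{i,j}})$ itself rather than its inverse even though the conjugation $\tilde A = CAC^{-1}$ sends $v$ to $vC_1^{-1}$; your choice $(C^{i,j})^{-1}=h_{i,j}(J_{k_{i,j}})$ is the precise one, and the discrepancy is harmless since the inverse of an invertible polynomial in $J_{k_{i,j}}$ is again a polynomial in $J_{k_{i,j}}$.
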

	\begin{proof} Let $B$ and $A$ be determined by the equalities \eqref{eq:B} and \eqref{eq:A}, correspondingly. 
		For each pair $(i,j)$, $1\le i\le m,\ 1\le j\le \beta_{i}$, we consider the vector $ {v}^{i,j}\in \K^{k_{i,j}}$ and define a matrix $C_{i,j}\in M_{k_{i,j}}$ as follows. If ${v}^{i,j}=0$, then we set $C_{ij} =I_{k_{i,j}}$. If ${v}^{i,j}\ne 0$, then by Lemma
		\ref{LM:jordan_basic_for_integral}  there exists $h_{i,j}\in \K_{k_{i,j}}[x]$ such that $h_{i,j}(J_{k_{i,j}})$ is invertible and  $v_{i,j} \cdot h_{i,j}(J_{k_{i,j}})$  has only one nonzero entry. In this case, we set $C_{ij} =h_{i,j}(J_{k_{i,j}})$. Note that $C_{i,j}$ commutes with $b_iI_{k_{i,j}} + J_{k_{i,j}}$ since the matrix $J_{k_{i,j}}$ commutes with a polynomial of itself. Hence,
		$$
		B\cdot \diag(C_{1,1},\ldots, C_{m,\beta_m}) = \diag(C_{1,1},\ldots, C_{m,\beta_m})\cdot B.
		$$
		Let us consider the matrix
		\[
		C =  \diag(C_{1,1},\ldots, C_{m,\beta_m},1) \in M_{n+1}
		\]
		and the matrix $\tilde A=CAC^{-1}$.
		By the choice of the blocks $C_{i,j}$, we have:
		\[
		\tilde{A} =  
		\left( \begin{smallmatrix}
			b_1I_{k_{1,1}} + J_{k_{1,1}}&&&&&&&(\tilde{u}^{1, 1})^\top\\
			&\ddots&&&&&&\vdots\\
			&&b_1I_{k_{1,\beta_1}} + J_{k_{1,\beta_1}} &&&&&(\tilde{u}^{1, \beta_1})^\top\\
			&&&\ddots &&&& \vdots\\
			&&&&b_mI_{k_{m,1}} + J_{k_{m,1}} &&& (\tilde{u}^{m, 1})^\top\\
			&&&&&\ddots&&\vdots\\
			&&&&&&b_mI_{k_{m,\beta_m}} + J_{k_{m,\beta_m}} & (\tilde{u}^{m, \beta_m})^\top\\
			\tilde{v}^{1, 1}& \ldots& \tilde{v}^{1, \beta_1} & \ldots &
			\tilde{v}^{m,1} & \ldots & \tilde{v}^{m, \beta_m} & b 
		\end{smallmatrix} \right)\in M_{n+1},
		\]
		where $\tilde{v}^{i,j} = (\underbrace{0, \ldots, 0}_{r_{i,j}}, 1, 0, \ldots, 0)
		\in \K^{k_{i,j}},$ as required.
	\end{proof}

	
	\begin{corollary} \label{Cor:ExN}
		Assume that
		$B \in M_n$ is integrable, and let $A\in M_{n+1}$ be its integral. Then there exists an integral $\tilde{A}\in M_{n+1}$ of   $B$ such that $\tilde A$ is a normalized integral  extension  of~$B$ and $p_{\tilde{A}}(x) = p_{A}(x).$
	\end{corollary}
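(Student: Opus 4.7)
The plan is to read this corollary as an immediate packaging of Lemma \ref{lem:normA}: that lemma produces, from an arbitrary integral extension $A$ of $B$, a \emph{normalized} integral extension $\tilde{A}$ together with a similarity $C = \diag(C_1,1)$ such that $\tilde{A} = C A C^{-1}$ and $C_1 B = B C_1$. I would apply this lemma to the given integral $A$ and then verify the two remaining requirements: that $\tilde{A}$ is actually an integral (not merely an integral extension) of $B$, and that it has the same characteristic polynomial as $A$.

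Both verifications are short. First, because $C_1$ commutes with $B$, the upper-left $n\times n$ block of $\tilde{A} = CAC^{-1}$ equals $C_1 B C_1^{-1} = B$, so $\tilde{A}$ is indeed an integral extension of the same matrix $B$ (and not of some conjugate of $B$). Second, conjugation preserves the characteristic polynomial, hence $p_{\tilde{A}}(x) = p_A(x)$, which immediately gives the second assertion of the corollary. Combining these, we get
\[
p_{\tilde{A}}'(x) = p_A'(x) = (n+1)\, p_B(x),
\]
where the last equality uses the hypothesis that $A$ is an integral of $B$. Thus $\tilde{A}$ satisfies the defining equation of an integral of $B$, and by Lemma \ref{lem:normA} it is a normalized integral extension of $B$.

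There is essentially no obstacle here: the content has already been absorbed into Lemma \ref{lem:normA}. The only subtle point worth stating explicitly is the commutativity $C_1 B = B C_1$, which is precisely what guarantees that conjugation by $C$ keeps the upper-left block fixed at $B$ rather than merely similar to $B$. Without this commutation, the similarity would only yield an integral of some matrix similar to $B$, which would not suffice for the statement of the corollary. Everything else is a one-line consequence of the invariance of the characteristic polynomial under similarity.
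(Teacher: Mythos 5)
Your proof is correct and follows the same route as the paper: apply Lemma \ref{lem:normA} to obtain the normalized extension $\tilde{A}=CAC^{-1}$, use the commutation $C_1B=BC_1$ to keep the top-left block equal to $B$, and use invariance of the characteristic polynomial under similarity to conclude $p_{\tilde{A}}=p_A$ and hence $p_{\tilde{A}}'=(n+1)p_B$. The only cosmetic difference is that the paper delegates the ``$\tilde{A}$ is an integral'' step to Corollary~\ref{CorSimInt}(2), whereas you verify that fact directly; the underlying argument is identical.
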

	\begin{proof}
		Let $A\in M_{n+1}$ be an integral of $B$. By Lemma \ref{lem:normA},  
		there exists a normalized integral  extension $\tilde{A}$ of~$B$. By Corollary~\ref{CorSimInt}(2), the matrix $\tilde{A}$ is an integral of $B$ since $ \diag(C_{1,1},\ldots, C_{m,\beta_m})$ commutes with $B$. Finally,  $p_{A}(x) = p_{\tilde{A}}(x)$ since the matrices $A$ and $\tilde{A}$ are similar. 
	\end{proof}
	
	
	We denote by $\K(y)$  the field of formal rational functions in the variable $y$ over the field~$\K$.
	
	\begin{lemma}
		Let $k \geq 1$, $0 \leq r \leq k$ be integers, and
		$X = (yI_k - J_k) \in M_{k}(\K(y))$. Then for the vector
		$v = (\underbrace{0, \ldots, 0}_{r}, 1, \underbrace{0, \ldots, 0}_{k - r - 1})\in \K^{k} $ and an arbitrary vector $u = (u_1, \ldots, u_k) \in \K^{k}$  it holds that 
		\begin{equation} \label{eq:vXu}
			vX^{-1}u^{\top} = \sum_{t = 1}^{k - r} u_{t+r}y^{-t}.
		\end{equation}
	\end{lemma}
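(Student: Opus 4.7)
The plan is a direct computation, exploiting the nilpotency of $J_k$ to obtain a closed form for $X^{-1}$ over $\K(y)$ and then reading off the desired sum.

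First, I would factor $X = yI_k - J_k = y\bigl(I_k - y^{-1}J_k\bigr)$ and use the fact that $J_k$ is nilpotent with $J_k^k = 0$. This gives the Neumann-type expansion
\[
X^{-1} \;=\; y^{-1}\sum_{i=0}^{k-1} \bigl(y^{-1}J_k\bigr)^i \;=\; \sum_{t=1}^{k} J_k^{t-1}\, y^{-t},
\]
which is a finite sum with entries in $\K(y)$, and one easily checks that multiplying by $X$ on either side yields $I_k$.

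Next I would recall that $J_k^{s}$ is the matrix whose only nonzero entries are $1$'s on the $s$-th superdiagonal, i.e.\ at positions $(i, i+s)$ for $1\le i\le k-s$, and is zero for $s\ge k$. Applied to the standard basis row vector $v = e_{r+1}$ (with the convention that $v=0$ when $r=k$), this gives $vJ_k^{s} = e_{r+1+s}$ when $r+1+s\le k$, and $vJ_k^{s} = 0$ otherwise. Substituting $s = t-1$ into the expansion for $X^{-1}$ yields
\[
vX^{-1} \;=\; \sum_{t=1}^{k-r} e_{r+t}\, y^{-t},
\]
where the upper limit $k-r$ comes precisely from the condition $r+(t-1)+1 \le k$.

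Finally, multiplying by $u^\top = (u_1,\dots,u_k)^\top$ on the right picks out the coordinate $u_{r+t}$ from each $e_{r+t}$, giving
\[
vX^{-1}u^\top \;=\; \sum_{t=1}^{k-r} u_{r+t}\, y^{-t},
\]
which is the claimed formula. The only ``obstacle'' is bookkeeping of indices; no step is genuinely hard, and the entire argument reduces to the explicit form of $(yI_k - J_k)^{-1}$ combined with the action of $J_k^s$ on a standard basis vector.
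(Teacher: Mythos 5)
Your proof is correct and takes essentially the same approach as the paper: both establish $X^{-1} = \sum_{t=1}^{k} y^{-t} J_k^{t-1}$ via the nilpotency of $J_k$ (the paper verifies this identity by direct multiplication, which is just your Neumann expansion made explicit) and then read off the $(r+1)$-th row of $X^{-1}$ before contracting with $u^{\top}$. Your convention that $v=0$ when $r=k$ also matches the paper's remark on that degenerate case.
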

	\begin{proof} We use the notation $J_k^0=I_k.$ Note that in case $k = r$ the vector $v$ is the zero vector. Otherwise
		the direct multiplication
		\[
		(yI_k - J_k)\sum_{t = 1}^{k } y^{-t}J^{t-1}_{k} = 
		\sum_{t = 1}^{k } (y^{1-t}J^{t-1}_{k} - y^{-t}J^{t}_{k})  = I_k - y^{-k}J^{k}_k = I_k
		\]
		shows that $X^{-1} = \sum_{t = 1}^{k} y^{- t}J^{t - 1}_{k}.$  
		
		Therefore,
		$$
		vX^{-1} =
		\left(\underbrace{0, \ldots, 0}_{r}, y^{-1}, y^{-2}, \ldots, y^{-(k - r)}\right)
		$$
		since it is the $(r+1)-$th row of $X^{-1}$.
		Hence, we obtain:
		$$
		(vX^{-1})u^{\top} = \sum_{t = 1}^{k - r} u_{t+r}y^{-t},
		$$
		as desired.
	\end{proof}
	
	\begin{lemma} \label{LM:general_pA}
		Assume $B\in M_n$ is in the Jordan normal form,
		and let $A = \left(
		\begin{smallmatrix}
			B & u^{\intercal}\\
			v & b
		\end{smallmatrix}
		\right)\in M_{n+1} $ be its normalized integral extension. Then in
		the notation  \eqref{eq:B} -- \eqref{eq:rij}  
		the characteristic polynomial of~$A$ is
		\begin{equation} \label{eq:pA}
			p_A(x) = (x - b)p_B(x) -
			\sum\limits_{i = 1}^{m}\sum\limits_{j = 1}^{\beta_i}
			\sum\limits_{t=1}^{k_{i,j} - r_{i,j}}
			u_{t+r_{i,j}}^{i,j}\frac{p_B(x)}{(x - b_i)^{t}}.
		\end{equation}
		
	\end{lemma}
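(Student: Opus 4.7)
The plan is to compute $p_A(x) = \det(xI_{n+1} - A)$ by treating it as a block determinant with a $1 \times 1$ lower-right corner and invoking the Schur complement formula, then to exploit the block-diagonal structure of $B$ to reduce the resulting expression to a sum of the local contributions already computed in the preceding lemma.

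Concretely, I would first write
\[
xI_{n+1} - A = \begin{pmatrix} xI_n - B & -u^{\top} \\ -v & x - b \end{pmatrix},
\]
and work over the field $\K(x)$, where the block $xI_n - B$ is invertible (its determinant is $p_B(x) \neq 0$). The Schur complement formula then yields
\[
p_A(x) = \det(xI_n - B)\,\bigl((x - b) - v(xI_n - B)^{-1} u^{\top}\bigr) = (x - b)\,p_B(x) - p_B(x)\cdot v(xI_n - B)^{-1} u^{\top}.
\]

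Next I would decompose the inner product $v(xI_n - B)^{-1} u^{\top}$ according to the block structure \eqref{eq:B}--\eqref{eq:A}. Since $B$ is block-diagonal with blocks $b_i I_{k_{i,j}} + J_{k_{i,j}}$, the matrix $xI_n - B$ is itself block-diagonal with blocks $(x - b_i)I_{k_{i,j}} - J_{k_{i,j}}$, and its inverse over $\K(x)$ decomposes accordingly. Thus
\[
v(xI_n - B)^{-1} u^{\top} = \sum_{i=1}^{m}\sum_{j=1}^{\beta_i} v^{i,j}\bigl((x - b_i)I_{k_{i,j}} - J_{k_{i,j}}\bigr)^{-1} (u^{i,j})^{\top}.
\]
Applying the previous lemma with $y = x - b_i$, $k = k_{i,j}$, $r = r_{i,j}$ (note that $v^{i,j}$ has exactly the normalized form required), each summand equals $\sum_{t=1}^{k_{i,j} - r_{i,j}} u^{i,j}_{t+r_{i,j}} (x - b_i)^{-t}$.

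Finally, I would multiply the resulting identity by $p_B(x)$. Because $(x - b_i)^{k_{i,j}}$ divides $(x - b_i)^{\alpha_i}$ and $\alpha_i \geq k_{i,j} \geq t$, each fraction $p_B(x)/(x - b_i)^{t}$ is a polynomial, and the formula \eqref{eq:pA} follows immediately. There is no substantial obstacle here: the only point that requires care is to verify the explicit form of the inverse of $(x-b_i)I_{k_{i,j}} - J_{k_{i,j}}$ so that the preceding lemma applies verbatim, and to check that the Schur complement identity — initially an identity in $\K(x)$ — yields a polynomial identity after clearing the denominator $p_B(x)$, which it does precisely because of the divisibilities just mentioned.
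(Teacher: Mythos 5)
Your proposal is correct and follows essentially the same route as the paper: the same block-determinant (Schur complement) identity $p_A(x) = p_B(x)\bigl((x-b) - v(xI_n - B)^{-1}u^{\top}\bigr)$ over $\K(x)$, the same block-diagonal decomposition of $(xI_n - B)^{-1}$, and the same application of the preceding lemma with $y = x - b_i$, $k = k_{i,j}$, $r = r_{i,j}$. Your extra remark that $t \leq k_{i,j} \leq \alpha_i$ makes each $p_B(x)/(x-b_i)^t$ a genuine polynomial is a point the paper leaves implicit, but it is immediate and changes nothing.
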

	\begin{proof}
		Using the formula for the determinant of a block matrix with invertible block $X_1$
		\[
		\det
		\left(
		\begin{smallmatrix}
			X_1&X_2\\X_3&X_4
		\end{smallmatrix}
		\right)
		= \det(X_1)\det(X_4 - X_3 X_1^{-1} X_2),
		\]
		we obtain that
		\[
		p_{A}(x) = \det(xI_{n + 1} - A) = \det
		\left(
		\begin{smallmatrix}
			xI_n - B & -u^{\intercal}\\
			-v & x - b
		\end{smallmatrix}
		\right)
		= \det(xI_n - B)\det((x - b) - v(xI_n - B)^{-1}u^{\top}),
		\]
		and therefore
		\begin{equation} \label{eq:pApB_block}
			p_{A}(x) = p_{B}(x)\left((x - b) - v(xI_n - B)^{-1}u^{\top}\right).
		\end{equation}
		Considering   $(xI_n - B)$   as an element of $M_n(\K(x))$,  
		in the notation \eqref{eq:B} -- \eqref{eq:rij}   we obtain:
		\[
		(xI_n - B)^{-1}
		=\left[xI_n - \diag(b_1 I_{k_{1,1}} + J_{k_{1,1}},\ \ldots,\ b_m I_{k_{m,\beta_m}} + J_{k_{m,\beta_m}})\right]^{-1},
		\]
		\[
		(xI_n - B)^{-1}
		= \left[\diag((x - b_1)I_{k_{1,1}} - J_{k_{1,1}},\ \ldots,\ (x - b_m)I_{k_{m,\beta_m}} - J_{k_{m,\beta_m}})\right]^{-1},
		\]
		\[
		(xI_n - B)^{-1}
		= \diag(\left[(x - b_1)I_{k_{1,1}} - J_{k_{1,1}}\right]^{-1},\ \ldots,\ \left[(x - b_m)I_{k_{m,\beta_m}} - J_{k_{m,\beta_m}}\right]^{-1}).
		\]
		Hence,
		\begin{equation} \label{eq:v(xI-B)u}
			v(xI_n - B)^{-1}u^{\top} = \sum\limits_{i = 1}^{m}\sum\limits_{j = 1}^{\beta_i} v^{i,i}\left[(x -b_i)I_{k_{i,j}} - J_{k_{i,j}}\right]^{-1} (u^{i,j})^{\top}.
		\end{equation}
		Now we apply the formula \eqref{eq:vXu} with $k = k_{i,j},\, r = r_{i,j},\, y = (x - b_i)$ to each summand and obtain that
		\begin{equation} \label{eq:v(xI-Bi)u}
			v^{i,i}\left[(x -b_i)I_{k_{i,j}} - J_{k_{i,j}}\right]^{-1} (u^{i,j})^{\top} = \sum\limits_{t = 1}^{k_{i,j} - r_{i,j}} u_{t + r_{i,j}}^{i,j}(x - b_i)^{-t}.
		\end{equation}
		It  remains  to substitute the expressions \eqref{eq:v(xI-B)u} and \eqref{eq:v(xI-Bi)u} into the equality \eqref{eq:pApB_block}, and the result follows.
	\end{proof}

	\begin{lemma} \label{LM:fg'=F}
		Let $f(x),\, g(x)\in \K[x]$ be polynomials, and $\deg g(x)> \deg f(x)$. Then for any $t \in\K$  satisfying
		$f(t) \neq 0$ and for any $k$, $0\le k\le  \deg g(x) - \deg f(x)  ,$   there exists a polynomial $h(x)\in \K[x]$, $\deg(h(x))\le k,$ such that
		\begin{equation} \label{eq:fh}
			(fh)^{(i)}(t) = g^{(i)}(t),\ i = 0, \ldots, k.
		\end{equation}
	\end{lemma}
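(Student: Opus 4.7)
The plan is to reformulate the $k+1$ scalar derivative conditions as a single polynomial congruence at $t$ and then invert $f$ locally. Two polynomials $P,Q\in\K[x]$ satisfy $P^{(i)}(t)=Q^{(i)}(t)$ for $i=0,1,\ldots,k$ if and only if $(x-t)^{k+1}$ divides $P-Q$, i.e.\ $P\equiv Q\pmod{(x-t)^{k+1}}$. So the desired conditions are equivalent to the single congruence $f(x)h(x)\equiv g(x)\pmod{(x-t)^{k+1}}$ in $\K[x]$.

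I would then exploit the hypothesis $f(t)\neq 0$ to invert $f$ in the formal power series ring $\K[[x-t]]$. Writing $f(x)=f(t)\bigl(1+(x-t)\tilde f(x)/f(t)\bigr)$ for a suitable $\tilde f\in\K[x]$, the standard geometric-series argument yields a well-defined inverse $1/f(x)\in\K[[x-t]]$, and consequently a formal Taylor expansion
\[
\frac{g(x)}{f(x)} \;=\; \sum_{i\geq 0} a_i\,(x-t)^i, \qquad a_i\in\K.
\]
Setting $h(x)=\sum_{i=0}^{k} a_i(x-t)^i$ produces a polynomial of degree at most $k$ by construction. Multiplying by $f$ and reducing modulo $(x-t)^{k+1}$ gives $f(x)h(x)\equiv f(x)\cdot g(x)/f(x)\equiv g(x)\pmod{(x-t)^{k+1}}$, which translates back to $(fh)^{(i)}(t)=g^{(i)}(t)$ for $0\leq i\leq k$, as required.

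No step appears genuinely difficult: inverting $f$ in $\K[[x-t]]$ is routine given $f(t)\neq 0$, and the passage between matching $k+1$ derivatives at $t$ and congruence modulo $(x-t)^{k+1}$ is standard. The degree hypothesis $k\leq \deg g-\deg f$ does not enter the existence argument itself and is presumably imposed for a subsequent application. A fully elementary alternative avoiding power series is to expand $h(x)=\sum_{i=0}^{k} c_i(x-t)^i$ with unknown $c_i$ and apply the Leibniz rule: the $i$-th condition reads $\sum_{j=0}^{i}\binom{i}{j}f^{(j)}(t)\,(i-j)!\,c_{i-j}=g^{(i)}(t)$, giving a lower triangular $(k+1)\times(k+1)$ linear system in $c_0,\ldots,c_k$ whose diagonal entries are $i!f(t)\neq 0$, hence uniquely solvable.
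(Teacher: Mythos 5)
Your proposal is correct, and your main argument takes a genuinely different route from the paper. The paper proceeds exactly as in your closing remark: it writes $h(x)=a_0+a_1(x-t)+\cdots+a_k(x-t)^k$, applies the Leibniz rule to $(fh)^{(s)}(t)$, and observes that the conditions \eqref{eq:fh} become a lower triangular $(k+1)\times(k+1)$ linear system in $a_0,\ldots,a_k$ with diagonal entries $i!f(t)\neq 0$ (the paper computes the determinant as $0!f(t)\cdot 1!f(t)\cdots k!f(t)$; incidentally, its displayed Leibniz expansion drops the binomial coefficients $\binom{s}{i}$ that you correctly include, a harmless slip since the system stays triangular with the same nonzero diagonal). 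Your primary argument instead recasts the $k+1$ derivative conditions as the single congruence $fh\equiv g \pmod{(x-t)^{k+1}}$ and takes $h$ to be the degree-$k$ truncation of $g/f$ in $\K[[x-t]]$, using $f(t)\neq 0$ to invert $f$. This is sound; the only point worth making explicit is that the equivalence between matching derivatives up to order $k$ at $t$ and divisibility by $(x-t)^{k+1}$ needs characteristic zero (or characteristic exceeding $k$), which holds here since $\K\subseteq\C$. What each approach buys: your congruence formulation explains conceptually why a solution exists and is unique modulo $(x-t)^{k+1}$ (it is the $k$-jet of $g/f$ at $t$), with no determinant computation, while the paper's linear-system argument is elementary, stays within the matrix-theoretic toolkit used elsewhere in the paper (compare the proof of Lemma \ref{LM:jordan_basic_for_integral}, which solves an analogous triangular system), and produces the coefficients explicitly. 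You are also right that the hypothesis $k\le \deg g-\deg f$ plays no role in the existence argument of either proof and is only relevant to the subsequent application in the proof of Theorem \ref{Cor:THM:general_integrable_iff_fullintegral}.
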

	\begin{proof}
		Let us write $h(x) = a_0 + a_1(x - t) + \ldots + a_k(x - t)^k$ with   unknown coefficients $a_0, \ldots, a_k\in \K$. Then
		$h^{(i)}(t) = i!a_i, i = 0, \ldots, k.$ Further,
		\[
		(fh)^{(s)}(t) = \sum\limits_{i = 0}^{s} f^{(s - i)}h^{(i)}(t) =
		\sum\limits_{i = 0}^{s} i!a_if^{(s - i)}(t).
		\]
		Thus, the condition \eqref{eq:fh}
		is equivalent to the equality
		\[
		\begin{pmatrix}
			0!f(t) & 0 & 0 & \ldots & 0\\
			0!f'(t) & 1!f(t) & 0 &\ldots & 0\\
			\vdots&\vdots&\vdots&\ddots&\vdots\\
			0!f^{(k)}(t) & 1!f^{(k-1)}(t) & 2!f^{(k-2)}(t) & \ldots & k!f(t)
		\end{pmatrix}
		\begin{pmatrix}
			a_0\\
			a_1\\
			\vdots
			\\
			a_k
		\end{pmatrix} = \begin{pmatrix}
			g(t)\\
			g'(t)\\
			\vdots
			\\
			g^{(k)}(t)
		\end{pmatrix}
		\]
		The determinant of this linear system is equal to
		$0!f(t)\cdot 1!f(t)\cdot \ldots\cdot k!f(t).$ Hence   the matrix of the system is invertible, since $f(t) \neq 0$.
		Then \eqref{eq:fh} is satisfied for
		\[
		\begin{pmatrix}
			a_0\\
			a_1\\
			\vdots
			\\
			a_k
		\end{pmatrix} = \begin{pmatrix}
			0!f(t) & 0 & 0 & \ldots & 0\\
			0!f'(t) & 1!f(t) & 0 &\ldots & 0\\
			\vdots&\vdots&\vdots&\ddots&\vdots\\
			0!f^{(k)}(t) & 1!f^{(k-1)}(t) & 2!f^{(k-2)}(t) & \ldots & k!f(t)
		\end{pmatrix}^{-1}
		\begin{pmatrix}
			g(t)\\
			g'(t)\\
			\vdots
			\\
			g^{(k)}(t)
		\end{pmatrix}. 
		\]
	\end{proof}
	
	\begin{proof}[Proof of Theorem~\ref{Cor:THM:general_integrable_iff_fullintegral}.]
		In the notation \eqref{eq:B} -- \eqref{eq:rij},
		we set $S = \{b_i\ |\ \beta_i > 1\}$.
		
		1. First, we show that if $A$ is an integral of $B$, then $p_{A}(x)$ is
		an $S$-full integral of $(n+1)p_{B}(x).$ 
		By  Corollary \ref{Cor:ExN}, we can assume that $A$ is normalized.
		
		Since $A$ is an integral of $B$, the equality   $p'_{A}(x)=(n+1)p_{B}(x)$ follows from the definition. It remains to show that $p_A(x)$ is an $S$-full integral, i.e., that  $p_A(b_i) = 0$ for all $b_i \in S.$  
		Observe that  $\beta_i > 1$ since  $b_i \in S$. Hence, there are at least two summands in the  equality \eqref{EQ:alpha} for $\alpha_i$. Since $k_{i,j}>0$,
		we obtain
		\[
		\alpha_i = \sum\limits_{j = 1}^{\beta_i} k_{i,j} >
		\max\limits_{1 \leq j \leq \beta_i}(k_{i,j}).
		\]
		Since $t$ ranges from  $1$ till $(k_{i,j}-r_{i,j})$ in the decomposition  \eqref{eq:pA},
		it follows that  
		\[
		t \leq \max\limits_{1\leq j \leq \beta_i}(k_{i,j} - r_{i,j}) \leq
		\max\limits_{1\leq j \leq \beta_i}(k_{i,j})  < \alpha_i.
		\]
		Since $b_i$ is the zero of the polynomial $p_B(x)$ of multiplicity $\alpha_i$,  we have
		$$
		(x-b_i) \mid \frac{p_B(x)}{(x - b_i)^{t}}\in \K[x]
		$$
		for each $t=1,\ldots ,(k_{i,j} - r_{i,j}).$ 
		Thus,   every summand in the decomposition \eqref{eq:pA} of $p_A(x)$
		is divisible by $(x - b_i)$, and hence $p_A(b_i) = 0.$
		Therefore, $p_A(x)$ is an $S$-full integral of $(n+1)p_B(x)$ as desired. 
		
		2. Now let us assume that $F(x)$ is an $S$-full integral of $(n+1)p_B(x)$ and prove that there exist vectors $u,v\in \K^n$ such that for  $A=  \left(
		\begin{smallmatrix}
			B & u^{\intercal}\\
			v &  b
		\end{smallmatrix}
		\right)\in M_{n+1} $, where $b=\frac{tr(B)}{n}$,
		the equality  $p_A(x) = F(x)$ holds. Let us observe first   that to prove the theorem it is  enough to find $u,v $ such that  
		\begin{equation} \label{eq:*}
			p_A^{(j)}(b_i) = F^{(j)}(b_i),\ i = 1, \ldots, m;\,
			j = 0, \ldots \alpha_i - 1.
		\end{equation}
		Indeed, \eqref{eq:*} implies that the polynomial $(p_A - F)(x)$ has
		$\sum \limits_{i = 1}^{m} \alpha_i = n$ zeros
		counting with the multiplicities. Notice that the coefficient
		at $x^{n-1}$ of $p_{B}(x)$ is equal to $-tr(B).$ Hence, since $F'(x) = (n+1)p_{B}(x)$,   the coefficient
		at $x^{n}$ of $F(x)$ is equal to $-\frac{n+1}{n}tr(B)$. On the other hand, the coefficient
		at $x^{n}$ of $p_{A}(x)$ is equal to
		$$
		-tr(A) = -(tr(B) +  b ) = -\frac{n+1}{n}tr(B).
		$$
		Thus,
		since both $p_A(x)$ and $F(x)$ are monic and the coefficient
		at $x^{n}$ of both polynomials is equal to $\frac{n+1}{n}tr(B),$ we obtain that
		$\deg((p_A - F)(x)) \leq n + 1 - 2 = n - 1.$
		Therefore,   $(p_A - F)(x)\equiv 0$, and so $p_A(x) = F(x)$ as desired.
		
		Now, let us prove \eqref{eq:*}.

		2.1. At first, we consider $b_i\in S$. Let us set $v^{i,j} = u^{i,j} =  0 \in \K^{k_{i,j}}$  for each  $j=1,\ldots, \beta_i$ and each $i=1, \ldots , m$ satisfying $b_i\in S$. Then by the formula \eqref{eq:pA} we obtain that 
		$(x - b_i)^{\alpha_i}\ | \ p_A(x)$ for any values of the other coordinates of the vectors $u,v$. 
		Observe that by definition of $F(x)$ and properties of zeros of derivatives  for any $b_i \in S$ it holds that
		$(x - b_i)^{\alpha_i}\ |\ F(x)$  or, equivalently, $  F^{(j)}(b_i) = 0,
		\, 0 \leq j < \alpha_i.$ 
		Hence, for any $b_i \in S$ we obtain
		\begin{equation} \label{EQ:zeros_from_S}
			p_A^{(j)}(b_i) = F^{(j)}(b_i) = 0,\, 0 \leq j < \alpha_i.
		\end{equation}
		
		2.2. Now let us consider  an eigenvalue $b_i$ of $B$ such that $b_i \notin S.$  According to the notation \eqref{eq:B} --- \eqref{eq:rij}  this means that   $\beta_i = 1$ and
		$k_{i, 1} = \alpha_i$.
		Let us set $f(x) = \frac{p_{B}(x)}{(x - b_i)^{\alpha_i}}$. Then $f(x)\in \K^{n-\alpha_i}[x]$ is a polynomial with the property $f(b_i)\ne 0$. Since
		$$
		\deg(F(x)) - \deg(f(x)) = n + 1 - (n - \alpha_i) =
		\alpha_i + 1 > \alpha_i - 1 \ge 0 ,
		$$
		it follows from  Lemma \ref{LM:fg'=F} that the system of equations
		\[
		\left(f(x)h(x)\right)^{(l)}(b_i) = F^{(l)}(b_i),
		\  0\leq l < \alpha_i, 
		\]
		on the coefficients of a polynomial $h(x)  $ has a solution $h_0(x) \in \K_{\alpha_i - 1}[x]$ of degree  $\deg h_0(x)= q$ with $0\le q\le  \alpha_i - 1 $. 
		
		Now we define the  elements $w_t\in \K$, $t=0,\ldots, \alpha_i-1$, as follows. Let us expand the polynomial $h_0(x)$ on the degrees of $x-b_i$, i.e., $h_0(x) = \sum_{t = 0}^{q} w_t(x - b_i)^t$. This expansion defines $w_0, \ldots, w_q.$ For $q+1\le t\le  \alpha_i - 1$,  we  set $w_t=0$.
		Let us show that for $v^{i,1} = (1, 0,\ldots, 0)$
		and $u^{i,1}_{t+1} = w_{{\alpha_i} - t}$ it holds that
		\begin{equation} \label{EQ:zeros_not_from_S}
			p_A^{(l)}(b_i) =
			\left(f(x)h_0(x)\right)^{(l)}(b_i) = F^{(l)}(b_i) ,\ 0\leq l < \alpha_l.
		\end{equation}
		Indeed, let us consider 
		\begin{equation} \label{eq:eq}
			\tilde p_A(x)=\sum\limits_{j = 1}^{\beta_i}
			\sum\limits_{t=1}^{k_{i,j} - r_{i,j}}
			u_{t+r_{i,j}}^{i,j}\frac{p_B(x)}{(x - b_i)^{t}}.
		\end{equation}
		Then by the decomposition \eqref{eq:pA} 
		\[
		p_A(x)+\tilde p_A(x)=(x - b)p_B(x) - \sum\limits_{\begin{smallmatrix} \varphi = 1, \ldots,m \\ \varphi \ne i \end{smallmatrix}} \sum\limits_{j = 1}^{\beta_\varphi}
		\sum\limits_{t=1}^{k_{\varphi,j} - r_{\varphi,j}}
		u_{t+r_{\varphi,j}}^{\varphi,j}\frac{p_B(x)}{(x - b_\varphi)^{t}}, 
		\]
		and using the fact that $(x - b_{i})^{\alpha_i}\,|\, p_B(x)$, we obtain 
		\begin{equation} \label{eq:div}
			(x - b_{i})^{\alpha_i} \mid (p_A(x)+\tilde p_A(x)  ).
		\end{equation}
		Since $\beta_i = 1$ and $k_{i,1} = \alpha_i$, we can remove the first summation in \eqref{eq:eq} and simplify the second as follows: 
		\[
		\tilde p_A(x) = \sum\limits_{t=1}^{k_{i,1} - r_{i,1}}
		u_{t+r_{i,1}}^{i,1}\frac{p_B(x)}{(x - b_i)^{t}} = \sum\limits_{t=1}^{\alpha_i - r_{i,1}}
		u_{t+r_{i,1}}^{i,1}\frac{p_B(x)}{(x - b_i)^{t}} .
		\]
		By the choice of $v^{i,1},u^{i,1}$, we have $r_{i, 1} = 0$ and $u^{i,1}_{t} = w_{\alpha_i - t}$. Therefore,
		\begin{multline*}
			\tilde p_A(x) = \sum\limits_{t=1}^{\alpha_{i}}
			u_{t}^{i,1}\frac{p_B(x)}{(x - b_i)^{t}} = \sum\limits_{t=1}^{\alpha_{i}}
			w_{\alpha_i - t}\frac{p_B(x)}{(x - b_i)^{t}}   = \sum\limits_{s=0}^{\alpha_{i} - 1}
			w_{s}\frac{p_B(x)}{(x - b_i)^{\alpha_{i} - s}} \\ = \sum\limits_{s=0}^{\alpha_{i} - 1} 
			w_{s} (x - b_i)^{  s}\frac{p_B(x)}{(x - b_i)^{\alpha_{i} }} = (f\cdot h_0)(x), \phantom{aaaaaaaaaaaaaaaaaaaaaaaaaa}
		\end{multline*}
		where the last equality  in the first row is due to the substitution $s=\alpha_i-t$. 
		Now \eqref{eq:div} implies that
		$$
		p_A^{(l)}(b_i) =
		\left(f\cdot h_0\right)^{(l)}(b_i) ,\ 0\leq l < \alpha_l.
		$$
		Therefore, by the choice of $h_0(x)$ we get
		\[
		p_A^{(l)}(b_i) =
		\left(f\cdot h_0\right)^{(l)}(b_i) = F^{(l)}(b_i) ,\ 0\leq l < \alpha_l.
		\] 
		Combining the equalities
		\eqref{EQ:zeros_from_S} and \eqref{EQ:zeros_not_from_S}
		we obtain   \eqref{eq:*}, and the result follows.
	\end{proof}
	
	
	\begin{proof}[Proof of Theorem~\ref{THM:matrix_int_classification}.]
		By Theorem \ref{Cor:THM:general_integrable_iff_fullintegral}, the matrix  $B$ is
		integrable if and only if
		an integral $\int p_B(x) {\rm d}x$ takes the same value on all elements of~$S= \{ \lambda_1, \ldots, \lambda_m\},$ which 
		means that $p_{B}(x)$ has an $S$-full integral.
		It  remains  to  apply
		Theorem \ref{THM:main} to get the desired statement.
	\end{proof}

	Notice that  Theorem~\ref{THM:matrix_int_classification} implies the following result proved in \cite[Theorem~3.13]{DanielyanGuterman}.
	\begin{theorem} \label{THM:recall_diag_matrix_int_classification}
		Let  $m,k\ge 0$ and
		$\alpha_1,\ldots, \alpha_{m}\geq 2$
		be  integers,
		$
		n=k + \sum\limits_{j=1}^{m}\alpha_j,
		$
		and 
		$\cal M$ the subset of  $M_n$ consisting of
		diagonalizable matrices $B$ 
		with pairwise different eigenvalues 
		$(\lambda_1, \ldots, \lambda_m,\, \mu_1, \ldots, \mu_k)$ of the multiplicities
		$\alpha_1,\ldots, \alpha_{m},\,1,\ldots, 1$, correspondingly.
		
		Then $\cal M$ contains an integrable matrix if and only if $m\le k+1$,  and contains a non-integrable matrix if and only if  $m\ge 2$. Moreover,
		\begin{enumerate} 
			\item If $m\leq 1$, then all matrices in $\cal M$ are integrable. 
			
			\item   If\, $2 \leq m \leq k + 1$,  then $\cal M$ contains both integrable and non-integrable matrices.

			\item  If $m > k + 1$, then all matrices in $\cal M$ are non-integrable. 
			
		\end{enumerate} 
	\end{theorem}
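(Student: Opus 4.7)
The plan is to deduce Theorem \ref{THM:recall_diag_matrix_int_classification} from Theorem \ref{THM:matrix_int_classification} by specializing the latter to the case $\beta_1=\ldots=\beta_k=1$. First I would translate notation: for a diagonalizable $B\in\cal M$, the fact that $\mu_j$ has algebraic multiplicity $1$ automatically gives $\dim\ker(B-\mu_jI)=1$ and corresponds to $\beta_j=1$, while the diagonalizability of $B$ together with $\alpha_i\ge 2$ gives $\dim\ker(B-\lambda_iI)=\alpha_i>1$. Thus every matrix in $\cal M$ belongs to the class of Theorem \ref{THM:matrix_int_classification} associated to the multiplicity data $(\alpha_1,\ldots,\alpha_m,1,\ldots,1)$. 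A direct computation gives $n=\sum_{j=1}^m\alpha_j+k$ and $M=\sum_{j=1}^m\alpha_j$, so the boundary value $n-M+1$ of Theorem \ref{THM:matrix_int_classification} becomes $k+1$, matching the trichotomy stated here.

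Next, for parts (1) and (3) I would simply restrict Theorem \ref{THM:matrix_int_classification}: if $m\le 1$ every matrix in the larger class is integrable, so in particular every diagonalizable one is; if $m>k+1=n-M+1$ every matrix in the larger class is non-integrable, so again the conclusion transfers to $\cal M$. No separate argument is needed here.

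The only part requiring a small extra observation is (2). In the regime $2\le m\le k+1$, Theorem \ref{THM:main} produces pairwise distinct algebraic numbers $a_1,\ldots,a_m,b_1,\ldots,b_k$ such that the polynomial $f_1(x)=\prod(x-a_i)^{\alpha_i}\prod(x-b_j)$ admits an $S$-full integral with $S=\{a_1,\ldots,a_m\}$, and another such tuple $a_1',\ldots,a_m',b_1',\ldots,b_k'$ for which the analogous polynomial $f_2(x)$ does not. Taking the diagonal matrices $B_1=\diag(a_1,\ldots,a_1,\ldots,a_m,\ldots,a_m,b_1,\ldots,b_k)$ (with $a_i$ repeated $\alpha_i$ times) and $B_2$ built similarly from the primed tuple, both lie in $\cal M$, and by Theorem \ref{Cor:THM:general_integrable_iff_fullintegral} each is integrable iff its characteristic polynomial is $S$-full integrable. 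Hence $B_1$ is integrable and $B_2$ is not, producing the required witnesses in $\cal M$.

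I do not expect any real obstacle: the proof is a bookkeeping specialization. The only point that must be stated carefully is that Theorem \ref{Cor:THM:general_integrable_iff_fullintegral} renders matrix integrability a property of the characteristic polynomial together with the geometric multiplicity data alone, which is what allows the examples from Theorem \ref{THM:main} (a priori only polynomial statements) to be realized by diagonal, hence diagonalizable, matrices in $\cal M$.
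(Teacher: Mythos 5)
Your proposal is correct and follows essentially the same route as the paper, which likewise deduces the statement from Theorem~\ref{THM:matrix_int_classification} by noting that diagonalizability forces $\beta_1=\ldots=\beta_k=1$ and hence $n-M+1=k+1$. Your extra care in part (2) --- realizing the polynomial witnesses of Theorem~\ref{THM:main} as diagonal matrices and invoking Theorem~\ref{Cor:THM:general_integrable_iff_fullintegral} --- makes explicit a step the paper leaves implicit (its one-line proof does not separately check that the integrable/non-integrable witnesses can be chosen diagonalizable), but it is the same argument, just spelled out.
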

	
	\begin{proof}
		If $B$ is diagonalizable, then,
		in the notations of Theorem \ref{THM:matrix_int_classification},
		we have  $\beta_1 = \ldots = \beta_k = 1$
		and hence $n - M + 1 = k + 1.$ Thus, Theorem \ref{THM:matrix_int_classification} provides the desired result.
	\end{proof}
	
	\subsection{Supplements}
	
	The main aim of this section is to prove the following theorem:
	\begin{theorem} \label{THM:diag_integral_is_general_integral}
		Let $B \in M_n$ be diagonalizable, and  $B' \in M_n$ have the same eigenvalues as $B$ counting the multiplicities, i.e.,
		$p_{B'}(x) = p_{B}(x).$ If $B$ is integrable, then $B'$ is integrable.  
	\end{theorem}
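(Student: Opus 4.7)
The plan is to combine Theorem~\ref{Cor:THM:general_integrable_iff_fullintegral} with a simple set-theoretic observation about how the set $S$ changes when we move from a diagonalizable matrix to an arbitrary matrix with the same spectrum. First, I would reformulate the integrability criterion in terms of $S$-full integrals: the condition that $\int p_B(x)\,\mathrm{d}x$ takes the same value on every element of $S$ is equivalent to the existence of an $S$-full integral of $p_B(x)$, since one can subtract this common value from any primitive.

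Next, I would compare the sets $S_B=\{\lambda\mid \dim\ker(B-\lambda I)>1\}$ and $S_{B'}=\{\lambda\mid \dim\ker(B'-\lambda I)>1\}$. For the diagonalizable matrix $B$, the geometric and algebraic multiplicities coincide, so $S_B$ is precisely the set $Z'(p_B)$ of zeros of $p_B(x)$ of multiplicity at least two. For the arbitrary matrix $B'$ with $p_{B'}(x)=p_B(x)$, the geometric multiplicity of each eigenvalue is bounded above by its algebraic multiplicity, hence
\[
S_{B'}\subseteq Z'(p_{B'})=Z'(p_B)=S_B.
\]

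With these two observations in hand, the argument is immediate. Since $B$ is integrable, Theorem~\ref{Cor:THM:general_integrable_iff_fullintegral} (together with the reformulation above) provides an $S_B$-full integral $F(x)$ of $p_B(x)$, that is, $F'(x)=p_B(x)$ and $F(\lambda)=0$ for every $\lambda\in S_B$. Because $S_{B'}\subseteq S_B$, the same polynomial $F(x)$ satisfies $F(\lambda)=0$ for every $\lambda\in S_{B'}$ and $F'(x)=p_{B'}(x)$, so $F(x)$ is an $S_{B'}$-full integral of $p_{B'}(x)$. Applying Theorem~\ref{Cor:THM:general_integrable_iff_fullintegral} once more to $B'$ yields the integrability of $B'$.

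There is essentially no hard step here: the whole content is packaged into Theorem~\ref{Cor:THM:general_integrable_iff_fullintegral}, and what remains is only the inclusion $S_{B'}\subseteq S_B$, which reflects the general inequality ``geometric multiplicity $\le$ algebraic multiplicity'' and the fact that equality holds for diagonalizable matrices. The only minor point to justify carefully is the equivalence between ``integral constant on $S$'' and ``existence of an $S$-full integral,'' which is done by translating any primitive by the common value it takes on $S$.
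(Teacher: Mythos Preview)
Your proposal is correct and takes a genuinely different, more elementary route than the paper.

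The paper proves the theorem by an explicit matrix construction: it first invokes an external result (\cite[Corollary~5.2]{DanielyanGuterman}) that an integrable diagonalizable matrix admits a \emph{diagonalizable} integral $A$, then uses the diagonalizability criterion (\cite[Theorem~4.1]{DanielyanGuterman}) to conclude that the border vectors $u^{i,j},v^{i,j}$ vanish whenever $\alpha_i>1$, and finally checks via the formula of Lemma~\ref{LM:general_pA} that the \emph{same} border data $(u,v,b)$ produce an integral of the Jordan normal form of $B'$. Your argument bypasses all of this machinery by applying Theorem~\ref{Cor:THM:general_integrable_iff_fullintegral} twice together with the inclusion $S_{B'}\subseteq Z'(p_{B'})=Z'(p_B)=S_B$, the last equality being forced by diagonalizability. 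What your approach buys is brevity and self-containment: no external citations are needed, and the theorem becomes an immediate corollary of Theorem~\ref{Cor:THM:general_integrable_iff_fullintegral}. What the paper's approach buys is constructiveness: it exhibits an integral $A'$ of $B'$ that shares the same border data $(u,v,b)$ with the given integral $A$ of $B$, a stronger conclusion than mere existence.
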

	
	Since by  \cite[Corollary 5.2]{DanielyanGuterman}  an integrable diagonalizable matrix   has a diagonalizable integral, Theorem~\ref{THM:diag_integral_is_general_integral} follows from the lemma below.
	

	\begin{lemma}
		Let $B \in M_n$ be diagonalizable, and let $B' \in M_n$ have the same eigenvalues as $B$ counting the multiplicities, i.e.,
		$p_{B'}(x) = p_{B}(x).$ If $B$ is integrable and  $A=\left(
		\begin{smallmatrix}
			B & u^{\intercal}\\
			v & b
		\end{smallmatrix}
		\right)$
		is a diagonalizable integral of $B,$ then there exists $X \in GL_{n}(\K)$ and there exists an integral $A'$ of $XB'X^{-1}$ such that $A'= \left(
		\begin{smallmatrix}
			XB'X^{-1} & u^{\intercal}\\
			v & b
		\end{smallmatrix}
		\right)$ with the same vectors~$(u, v)$ and an element $b$ as~$A$ has.
	\end{lemma}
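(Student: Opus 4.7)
The plan is to reformulate the desired conclusion as a single rational-function identity, solve it by a realization argument, and then produce an invertible $X$ realizing the resulting choice of vectors.

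By the Schur complement formula applied to the block structure of $A$ and $A'$,
\[
p_A(x) = p_B(x)\bigl((x-b) - v(xI-B)^{-1}u^\top\bigr),\qquad
p_{A'}(x) = p_B(x)\bigl((x-b) - vX(xI-B')^{-1}X^{-1}u^\top\bigr),
\]
using $p_{B'}=p_B$. The matrix $A'$ is an integral of $XB'X^{-1}$ iff $p_{A'}'(x) = p_A'(x)$; it suffices to enforce the stronger identity $p_A = p_{A'}$, which, after cancelling the common factor $p_B(x)$, is equivalent to
\[
vX(xI-B')^{-1}X^{-1}u^\top \;=\; v(xI-B)^{-1}u^\top \;=:\; R(x).
\]

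Since $B$ is diagonalizable, $R(x) = \sum_\mu c_\mu/(x-\mu)$ has only simple poles at the distinct eigenvalues $\mu$ of $B$, with residues $c_\mu$ readily computable in a diagonalizing basis. I would conjugate $B' = YJY^{-1}$ to Jordan form and substitute $\hat u^\top := Y^{-1}X^{-1}u^\top$, $\hat v := vXY$, so that the identity becomes $\hat v(xI-J)^{-1}\hat u^\top = R(x)$. A direct block-by-block computation of $(xI-J)^{-1}$ shows that if $\hat u$ is supported only on the eigenvector positions of $J$ (the first coordinate of each Jordan block), then $\hat v(xI-J)^{-1}\hat u^\top$ automatically has only simple poles, and the residue at $\mu$ equals the sum of products $\hat v_p\hat u_p$ over positions $p$ corresponding to the first coordinates of the Jordan blocks of $B'$ at $\mu$. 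Matching these sums to the $c_\mu$ is a trivial distribution problem, and the non-first-position coordinates of $\hat v$ stay free.

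The last step is to construct $X \in GL_n$ realizing the chosen vectors $\tilde u^\top := Y\hat u^\top$ and $\tilde v := \hat v Y^{-1}$ through $X\tilde u^\top = u^\top$ and $vX = \tilde v$. These are linear conditions on $X$, automatically compatible since $\tilde v\tilde u^\top = vu^\top$ equals the zeroth coefficient of $R$ at infinity. Their solution set is thus a non-empty affine subspace of $M_n$ of dimension $(n-1)^2$, in which I would construct an invertible element column by column, using the residual freedom in the non-first-position coordinates of $\hat v$ to avoid degenerate alignments. The trivial cases $u=0$ or $v=0$ make $A$ block triangular and reduce to an arbitrary choice of $X$. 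The principal obstacle is this final step: verifying that the affine family of $X$'s is not contained in the singular locus $\{\det X = 0\}$. This requires careful treatment of degenerate configurations (for instance when $vu^\top=0$, several $c_\mu$'s vanish, or $u$ and $v$ share limited support), and in borderline cases may demand an explicit column-by-column construction exploiting the remaining freedom in $\hat v$.
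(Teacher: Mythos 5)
Your proposal is correct in substance, but it takes a genuinely different route from the paper's. The paper first passes to a normalized integral extension (Lemma~\ref{lem:normA}), then invokes the diagonalizability criterion for integrals from \cite{DanielyanGuterman} to conclude that all coordinates of $u,v$ sitting over \emph{multiple} eigenvalues vanish --- this is the one place where the hypothesis that $A$ is diagonalizable is used --- and then simply replaces $B$ by the Jordan form $B''$ of $B'$ with compatibly ordered eigenvalues: since at simple eigenvalues the Jordan cells of $B$ and $B''$ are both of size one, the explicit formula of Lemma~\ref{LM:general_pA} returns $p_{A'}=p_A$ for \emph{literally the same} $(u,v,b)$, with $X$ the conjugation taking $B'$ to $B''$. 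You instead reduce to the rational identity $\hat v(xI-J)^{-1}\hat u^\top=R(x)$ and solve a realization problem; your pole/residue computation for $(xI-J)^{-1}$ with $\hat u$ supported on the leading coordinates of the Jordan blocks is correct, and matching residues does suffice, since both sides are strictly proper with only simple poles. Notably, your argument never uses diagonalizability of $A$ --- only that diagonalizability of $B$ forces $R$ to have simple poles --- so you in fact prove a slightly stronger statement than the paper does; the price is that you must re-synthesize the vectors and then realize them by an invertible $X$, whereas the paper reuses $(u,v)$ verbatim.

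Your flagged ``principal obstacle'' is not a real obstacle, and no genericity or determinantal-locus argument is needed. Using your residual freedom, first arrange the zero patterns to match: choose $\hat u=0$ exactly when $u=0$ and $\hat v=0$ exactly when $v=0$ (when $R\equiv 0$ but $u,v\neq 0$, give $\hat u$ and $\hat v$ disjoint supports, which is possible for $n\ge 2$, while $n=1$ makes this case vacuous). Then, writing $\tilde u^\top=Y\hat u^\top$ and $\tilde v=\hat v Y^{-1}$, an invertible $X$ with $X\tilde u^\top=u^\top$ and $vX=\tilde v$ is built directly: if $v u^\top=\tilde v\tilde u^\top\neq 0$, extend $\tilde u^\top$ by a basis of $\ker\tilde v$ and $u^\top$ by a basis of $\ker v$, and let $X$ match these bases; if $vu^\top=0$ with all four vectors nonzero, map a basis of $\ker\tilde v$ containing $\tilde u^\top$ to a basis of $\ker v$ containing $u^\top$ and send one transversal vector $z$ with $\tilde v z=1$ to a vector $w$ with $vw=1$; the degenerate cases $u=0$ or $v=0$ reduce, as you note, to a triangular situation where $X$ may be taken to satisfy only the one remaining linear condition. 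With this two-sided completion lemma made explicit, your proof is complete and closes exactly the step you identified.
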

	\begin{proof}
		Without loss of generality, we assume that
		\[
		B = \diag(\underbrace{b_1, \ldots, b_1}_{\alpha_1},\ldots,
		\underbrace{b_m, \ldots, b_m}_{\alpha_m}).
		\]
		By Lemma~\ref{lem:normA}, without loss of generality we may assume that $A$ is a normalized integral extension of~$B$. 
		Since $B$ is diagonalizable, in the notations of the formula \eqref{eq:A} we have that $k_{i,j}=1$ for all $i,j$. Thus $\beta_i=\alpha_i,$ and $v^{i , j} , u^{i, j} \in \K$.
		By the criterion of diagonalizability of an integral \cite[Theorem 4.1]{DanielyanGuterman},
		we obtain that if
		$\alpha_i > 1$, then  
		$v^{i , j} = u^{i, j} = 0,\ j = 1, \ldots, \alpha_i.$
		By   Lemma \ref{LM:general_pA},  these equalities imply that
		\[
		p_{A}(x) = (x - b)p_{B}(x) -
		\sum\limits_{\substack{s=1  : \\ \alpha_s = 1}}^{m} u_{s, 1}
		\frac{p_{B}(x)}{x - b_s}.
		\]

		Let $B''$ be the Jordan normal form of $B'$ such that the order of the   diagonal elements of $B''$ coincides with the order of the diagonal elements of $B$. Let us consider $A' = \left(
		\begin{smallmatrix}
			B'' & u^{\intercal}\\
			v & b
		\end{smallmatrix}
		\right).$ 
		Since $A$ is a normalized extension of $B$  and $v^{i , j} = u^{i, j} = 0,\ {\color{blue}j = 2}, \ldots, \alpha_i,$ it follows that $A'$ is  a normalized extension of $B''$ by definition. Hence, by   Lemma \ref{LM:general_pA}, we obtain
		\[
		p_{A'}(x) = (x - b)p_{B''}(x) -
		\sum\limits_{\substack{s=1 : \\ \alpha_s = 1}}^{m} u_{s, 1}
		\frac{p_{B''}(x)}{x - b_s}.
		\]
		Since $p_{B''}(x) =p_{B'}(x) = p_{B}(x)$, this implies 
		$p_{A'}(x) = p_{A}(x)$, and therefore 
		$$
		p_{A'}'(x) = (n +1)p_{B}(x) = (n +1)p_{B''}(x).
		$$
		Thus, $A'$ is an integral of~$B''.$
	\end{proof}
	
	\begin{rmk}
		Even if $A=\left(
		\begin{smallmatrix}
			B & u^{\intercal}\\
			v & b
		\end{smallmatrix}
		\right)$ is a diagonalizable integral of $B$, the integral
		$A'=\left(
		\begin{smallmatrix}
			B' & u^{\intercal}\\
			v & b
		\end{smallmatrix}
		\right)$
		of $B'$ can be non-diagonalizable.
		Consider, for example, $B = \left(
		\begin{smallmatrix}
			a & 0\\
			0 & a
		\end{smallmatrix}
		\right),\ B' = \left(
		\begin{smallmatrix}
			a & 1\\
			0 & a
		\end{smallmatrix}
		\right).$
		Then $p_{B}(x) = p_{B'}(x)$ and the vectors $u=v=(0, 0)$ provide a diagonal integral
		$\left(
		\begin{smallmatrix}
			a & 0 & 0\\
			0 & a & 0\\
			0 & 0 & a
		\end{smallmatrix}
		\right)$
		of $B.$
		However, $\left(
		\begin{smallmatrix}
			a & 1 & 0\\
			0 & a & 0\\
			0 & 0 & a
		\end{smallmatrix}
		\right)$ is not diagonalizable.
	\end{rmk}

	\begin{cor} \label{cor_cor}
		Consider an arbitrary (not necessarily diagonalizable) $B \in M_n.$
		If $p_{B}(x)$ has an $S$-full integral, where $S$ is the set of all multiple zeros of $p_B(x)$, then $B$ is integrable.
	\end{cor}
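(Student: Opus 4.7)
The plan is to deduce Corollary \ref{cor_cor} directly from Theorem \ref{Cor:THM:general_integrable_iff_fullintegral}, using only the trivial containment between two sets of eigenvalues.

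First, I would fix notation to avoid a clash: let $S_{\mathrm{geom}} = \{\lambda : \dim\ker(B-\lambda I) > 1\}$ (the set appearing in Theorem \ref{Cor:THM:general_integrable_iff_fullintegral}), and let $S = Z'(p_B)$ denote the set of all zeros of $p_B(x)$ of multiplicity at least two (the set appearing in the hypothesis of the corollary). The first key observation is the inclusion $S_{\mathrm{geom}} \subseteq S$: if $\lambda \in S_{\mathrm{geom}}$ then $B$ has at least two Jordan cells with eigenvalue $\lambda$, and since every Jordan cell contributes at least $1$ to the algebraic multiplicity, $\lambda$ is a zero of $p_B(x)$ of multiplicity $\ge 2$, i.e.\ $\lambda \in S$.

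Next, suppose $F(x)\in\mathbb K[x]$ is an $S$-full integral of $p_B(x)$, so that $F'(x) = p_B(x)$ and $F(t) = 0$ for every $t\in S$. By the inclusion above, $F(t)=0$ for every $t\in S_{\mathrm{geom}}$ as well. In particular, $F$ takes the same value (namely $0$) on all elements of $S_{\mathrm{geom}}$.

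Finally, I would invoke Theorem \ref{Cor:THM:general_integrable_iff_fullintegral}: since a primitive of $p_B(x)$ exists which is constant on $S_{\mathrm{geom}}$, the matrix $B$ is integrable. This completes the argument. There is no real obstacle here, since the content has already been absorbed into Theorem \ref{Cor:THM:general_integrable_iff_fullintegral}; the only point to check carefully is the set containment $S_{\mathrm{geom}} \subseteq S$, which follows immediately from the definitions.
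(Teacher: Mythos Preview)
Your argument is correct and in fact more direct than the route taken in the paper. The paper proves Corollary~\ref{cor_cor} by first passing to a diagonal matrix $B'$ with the same spectrum as $B$; for such $B'$ the set $\{\lambda : \dim\ker(B'-\lambda I)>1\}$ coincides with $Z'(p_{B})$, so Theorem~\ref{Cor:THM:general_integrable_iff_fullintegral} gives integrability of $B'$, and then Theorem~\ref{THM:diag_integral_is_general_integral} is invoked to transfer integrability from the diagonalizable $B'$ to the arbitrary $B$. You instead apply Theorem~\ref{Cor:THM:general_integrable_iff_fullintegral} directly to $B$, using only the elementary inclusion $S_{\mathrm{geom}}\subseteq Z'(p_B)$, and thereby bypass Theorem~\ref{THM:diag_integral_is_general_integral} entirely. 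Your route is shorter and shows that the corollary does not actually depend on the supplementary material about diagonalizable integrals; the paper's route, on the other hand, highlights the connection with the diagonalizable case that motivates the surrounding section.
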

	\begin{proof}
		If $p_{B}(x)$ has a full integral, then a diagonal matrix $B'$ with the same spectrum as $B$ is  integrable by Theorem \ref{Cor:THM:general_integrable_iff_fullintegral}. Hence, the matrix $B$ is integrable  by  Theorem~\ref{THM:diag_integral_is_general_integral}.
	\end{proof}
	
	\begin{rmk}
		The assertion opposite to Corollary~\ref{cor_cor} is not true. Consider the non-diagonalizable
		matrix
		$$
		B =
		\left(
		\begin{smallmatrix}
			1&0&0&0\\
			0&1&0&0\\
			0&0&-1&1\\
			0&0&0&-1
		\end{smallmatrix}\right).
		$$
		Since $p_{B}(x) = (x - 1)^2(x + 1)^2$, it follows from  \cite[Theorem
		2.14]{DanielyanGuterman} that the matrix $B'=\diag (1,1,-1,-1)$ is not integrable.
		Nevertheless, the matrix
		$$
		A = \left(\begin{smallmatrix}
			1&0&0&0&0\\
			0&1&0&0&0\\
			0&0&-1&1&0\\
			0&0&0&-1&\frac{4}{3}\\
			0&0&1&1&0
		\end{smallmatrix}\right)
		$$
		is an integral of $B$, since
		$p_{A}(x) = (x - 1)^2(x^3 + 2x^2 - \frac{1}{3}x + \frac{8}{3}) =
		x^5 - \frac{10}{3}x^3 + 5x - \frac{8}{3}$ and
		\[
		p_{A}'(x) = 5(x^4 - 2x^2 + 1) = 5(x - 1)^2(x + 1)^2 = 5p_{B}(x).
		\]
	\end{rmk}
	
	As a conclusion, we made the following observation:
	\begin{rmk}
		Consider an ordered set of matrices  $\{B_1,\ldots, B_m\}\subset M_n$ having 
		the same eigenvalues counting the multiplicities. Let us assume that the following conditions are satisfied:
		\[
		\dim(\text{Ker}(B_i - \lambda I)) \geq \dim(\text{Ker}(B_{i+1} - \lambda I)) 
		\]
		for any $\lambda$, and 
		\[
		\dim(\text{Ker}(B_i - \lambda I)) > \dim(\text{Ker}(B_{i+1} - \lambda I))
		\]
		for at least one value of~$
		\lambda,
		$ i.e., the number of Jordan cells in a Jordan block does not increase with the growth of $i$, and decreases for at least one block for each $i$.
		In this case, similarly to the proof of Theorem \ref{Cor:THM:general_integrable_iff_fullintegral}, it can be shown that  
		if $B_{i}$ is integrable for a certain index $i$, then  $B_{i+1}$ is integrable.
		Since the number of cells in a certain block decreases, we obtain just 1 cell on a certain step. Therefore,  that if $m$ is big enough, then $B_m$ is non-derogatory, and hence integrable. 
		Thus, if $B_1$ is not integrable, then there exists a positive integer $k$ such that
		the matrices $B_1,\ldots,B_k$ are not integrable, but the matrices
		$B_{k + 1},\ldots, B_m$ are integrable.
	\end{rmk}
	

	\begin{rmk} We notice that the main results of this paper can be extended to an arbitrary algebraically closed field $\widehat{\K}$ of zero characteristic, not necessarily contained in $\C$.  Indeed, since the proof of Theorem~\ref{Cor:THM:general_integrable_iff_fullintegral} uses only  linear algebra techniques, its conclusion  
		remains true over  $\widehat{\K}$.
		The same is true for the parts 1, 2, and 4 of Theorem~\ref{THM:main}. To prove the part 3,  we constructed special polynomials $f(x)\in \overline{\mathbb Q}[x] $, where $\overline{\mathbb Q}$ was considered as a subset of $\K$ and $\C.$ 
		However, an arbitrary algebraically closed field of characteristic zero $\widehat{\K}$ contains  $\mathbb Q$, and  therefore it also contains a subfield $\overline{\mathbb Q}'$ isomorphic to $\overline{\mathbb Q}$.   It is straightforward to check that the image of a polynomial $f(x)$ from  $\overline{\mathbb Q}[x] $ in $\overline{\mathbb Q}'[x]$ has the same properties of $S$-full integrability as $f(x)$. Therefore, Theorem~\ref{THM:main} 
		remains true over $\widehat{\K}$, and the same is true for 
		Theorem~\ref{THM:matrix_int_classification} obtained as a combination of Theorem~\ref{Cor:THM:general_integrable_iff_fullintegral} and Theorem~\ref{THM:main}. 
	\end{rmk} 
	
	\section*{Acknowledgment}
	
	The authors are grateful to the referee for the useful suggestions.
	
	The research of the first author was supported by ISF Grant No. 1994/20. The research of the third and the forth authors was supported by ISF Grant No. 1432/18.

\end{document}